\DeclareMathOperator{\Diag}{Diag}
\DeclareMathOperator{\PSL}{PSL}
\DeclareMathOperator{\AGL}{AGL}
\DeclareMathOperator{\PermAut}{PermAut}
\DeclareMathOperator{\Aut}{Aut}
\DeclareMathOperator{\diff}{diff}
\DeclareMathOperator{\alt}{A}
\DeclareMathOperator{\s}{S}
\DeclareMathOperator{\soc}{soc}
\DeclareMathOperator{\Sym}{Sym}
\DeclareMathOperator{\mg}{M}
\renewcommand{\b}{\underline}
\renewcommand{\b}{\mathbf}
\renewcommand{\leq}{\leqslant}
\renewcommand{\geq}{\geqslant}
\newcommand{\D}{\mathcal{D}}
\newcommand{\F}{\mathbb{F}}
\renewcommand{\H}{\mathcal H}
\renewcommand{\O}{\mathcal O}
\renewcommand{\P}{\mathcal P}
\newcommand{\E}{\mathcal E}
\newcommand{\Z}{\mathbb{Z}}
\newcommand{\nsub}{\trianglelefteq}
\newcommand{\QQ}{ {Q_i^\times \times Q_j^\times} }
\newcommand{\Xtp}{X_{{\b 0},i,j}}
\newcommand{\Xtpi}{X_{{\b 0},i,j}^{Q_i^\times}}
\newcommand{\Xtpj}{X_{{\b 0},i,j}^{Q_j^\times}}
\DeclareMathOperator{\wt}{wt}
\DeclareMathOperator{\supp}{supp}
\theoremstyle{plain}
\newtheorem{lemma}{Lemma}
\newtheorem{theorem}[lemma]{Theorem}
\newtheorem{corollary}[lemma]{Corollary}
\newtheorem{proposition}[lemma]{Proposition}
\theoremstyle{definition}
\newtheorem{definition}[lemma]{Definition}
\numberwithin{equation}{section}
\numberwithin{lemma}{section}
\begin{document}

\title{
 $2$-Neighbour-Transitive Codes with Small Blocks of Imprimitivity\thanks{The first author is grateful for the support of an Australian Research Training Program Scholarship and a University of Western Australia Safety-Net Top-Up Scholarship. The research forms part of Australian Research Council Project FF0776186.}
}

\author{
 Neil I. Gillespie$^1$
 \and
 Daniel R. Hawtin$^2$
 \and
 Cheryl E. Praeger$^3$
}

\date{
 \small{
  \emph{
   $^1$Heilbronn Institute for Mathematical Research,\\
   School of Mathematics, Howard House,\\ 
   University of Bristol, BS8 1SN, United Kingdom.\\
   \href{mailto:neil.gillespie@bristol.ac.uk}{neil.gillespie@bristol.ac.uk}\\
   \vspace{0.25cm}
  }
 }
 \small{
  \emph{
   $^2$School of Science \& Environment (Mathematics),\\
   Memorial University of Newfoundland, Grenfell Campus,\\ 
   Corner Brook, NL, A2H 5G5, Canada.\\
   \href{mailto:dan.hawtin@gmail.com}{dan.hawtin@gmail.com}\\
   \vspace{0.25cm}
  }
 }
 \small{
  \emph{
   $^3$Centre for the Mathematics of Symmetry and Computation,\\
   The University of Western Australia,\\ 
   Crawley, WA, 6009, Australia.\\
   \href{mailto:cheryl.praeger@uwa.edu.au}{cheryl.praeger@uwa.edu.au}\\
   \vspace{0.25cm}
  }
 }
 \today
}

\maketitle
\begin{abstract}
 A code $C$ in the Hamming graph $\varGamma=H(m,q)$ is \emph{$2$-neighbour-transitive} if $\Aut(C)$ acts transitively on each of $C=C_0$, $C_1$ and $C_2$, the first three parts of the distance partition of $V\varGamma$ with respect to $C$. Previous classifications of families of $2$-neighbour-transitive codes leave only those with an affine action on the alphabet to be investigated. Here, $2$-neighbour-transitive codes with minimum distance at least $5$ and that contain ``small'' subcodes as blocks of imprimitivity are classified. When considering codes with minimum distance at least $5$, completely transitive codes are a proper subclass of $2$-neighbour-transitive codes. Thus, as a corollary of the main result, completely transitive codes satisfying the above conditions are also classified.
\end{abstract}

\section{Introduction}

Classifying classes of codes is an important task in error correcting coding theory. The parameters of perfect codes over prime power alphabets have been classified; see \cite{tietavainen1973nonexistence} or \cite{Zinoviev73thenonexistence}. In contrast, for the classes of \emph{completely regular} and \emph{$s$-regular} codes, introduced by Delsarte \cite{delsarte1973algebraic} as a generalisation of \emph{perfect} codes, similar classification results have only been achieved for certain subclasses. Recent results include \cite{borges2000nonexistence,borgesrho1,borges2012new,Borges201468}. For a survey of results on completely regular codes see \cite{borges2017survey}. Classifying families of \emph{$2$-neighbour transitive} codes has been the subject of \cite{ef2nt,aas2nt}.

A subset $C$ of the vertex set $V\varGamma$ of the Hamming graph $\varGamma=H(m,q)$ is a called \emph{code}, the elements of $C$ are called \emph{codewords}, and the subset $C_i$ of $V\varGamma$ consisting of all vertices of $H(m,q)$ having nearest codeword at Hamming distance $i$ is called the \emph{set of $i$-neighbours} of $C$. The definition of a completely regular code $C$ involves certain combinatorial regularity conditions on the \emph{distance partition} $\{C,C_1,\ldots, C_\rho\}$ of $C$, where $\rho$ is the \emph{covering radius}. The current paper concerns the algebraic analogues, defined directly below, of the classes of completely regular and $s$-regular codes. Note that the group $\Aut(C)$ is the setwise stabiliser of $C$ in the full automorphism group of $H(m,q)$.

\begin{definition}\label{sneighbourtransdef}
 Let $C$ be a code in $H(m,q)$ with covering radius $\rho$, let $s\in\{1,\ldots,\rho\}$, and $X\leq\Aut(C)$. Then $C$ is said to be
 \begin{enumerate}
  \item \emph{$(X,s)$-neighbour-transitive} if $X$ acts transitively on each of the sets $C,C_1,\ldots, C_s$,
  \item \emph{$X$-neighbour-transitive} if $C$ is $(X,1)$-neighbour-transitive, 
  \item \emph{$X$-completely transitive} if $C$ is $(X,\rho)$-neighbour-transitive, and,
  \item \emph{$s$-neighbour-transitive}, \emph{neighbour-transitive}, or \emph{completely transitive}, respectively, if $C$ is $(\Aut(C),s)$-neighbour-transitive, $\Aut(C)$-\emph{neighbour-transitive}, or $\Aut(C)$-\emph{completely transitive}, respectively.
 \end{enumerate}
\end{definition}

A variant of the above concept of complete transitivity was introduced for linear codes by Sol{\'e} \cite{sole1987completely}, with the above definition first appearing in \cite{Giudici1999647}. Note that non-linear completely transitive codes do indeed exist; see \cite{gillespie2012nord}. Completely transitive codes form a subfamily of completely regular codes, and $s$-neighbour transitive codes are a sub-family of $s$-regular codes, for each $s$. It is hoped that studying $2$-neighbour-transitive codes will lead to a better understanding of completely transitive and completely regular codes. Indeed a classification of $2$-neighbour-transitive codes would have as a corollary a classification of completely transitive codes.

Completely-transitive codes have been studied in \cite{Borges201468,Gill2017}, for instance. Neighbour-transitive codes are investigated in  \cite{ntrcodes,gillespiediadntc,gillespieCharNT}. The class of $2$-neighbour-transitive codes is the subject of \cite{ef2nt,aas2nt}, and the present work comprises part of the first author's PhD thesis \cite{myphdthesis}. Recently, codes with $2$-transitive actions on the entries of the Hamming graph have been used to construct families of codes that achieve capacity on erasure channels \cite{Kudekar:2016:RCA:2897518.2897584}, and many $2$-neighbour-transitive codes indeed admit such an action; see Proposition~\ref{ihom}.

The study of $2$-neighbour-transitive codes has been partitioned into three subclasses, as per the following definition. For definitions and notation see Section~\ref{prelimsect}.

\begin{definition}\label{efaasaadef}
 Let $C$ be a code in $H(m,q)$, $X\leq\Aut(C)$ and $K$ be the kernel of the action of $X$ on the set of entries $M$. Then $C$ is 
 \begin{enumerate}
  \item \emph{$X$-entry-faithful} if $X$ acts faithfully on $M$, that is, $K=1$,
  \item \emph{$X$-alphabet-almost-simple} if $K\neq 1$, $X$ acts transitively on $M$, and $X_i^{Q_i}$ is a $2$-transitive almost-simple group, and,
  \item \emph{$X$-alphabet-affine} if $K\neq 1$, $X$ acts transitively on $M$, and $X_i^{Q_i}$ is a $2$-transitive affine group.
 \end{enumerate}
\end{definition}

Note that Propositions~\ref{ihom} and \ref{x12trans}, and the fact that every $2$-transitive group is either affine or almost-simple (see \cite[Section 154]{burnside1911theory}), ensure that every $2$-neighbour-transitive code satisfies precisely one of the cases given in Definition~\ref{efaasaadef}. 

Those $(X,2)$-neighbour transitive codes that are also $X$-entry-faithful and have minimum distance at least $5$ are classified in \cite{ef2nt}; while those that are $X$-alphabet-almost-simple and have minimum distance at least $3$ are classified in \cite{aas2nt}. Hence, it is assumed here that the action on the alphabet is affine and the kernel of the action on entries is non-trivial. Here, $T_W$ denotes the group of translations by elements of a subspace $W$, $K$ denotes the kernel of the action of the group $X$ on entries, and $K=X\cap B$, where $B\cong \s_q^m$ is the base group in $\Aut(\varGamma)$, the full automorphism group of the Hamming graph; see Section~\ref{prelimsect}.

\begin{definition}\label{xntextensiondef}
 Let $q=p^d$, $V=\F_p^{dm}$ and $W$ be a non-trivial $\F_p$-subspace of $V$. Identify $V$ with the vertex set of the Hamming graph $H(m,q)$. An \emph{$(X,2)$-neighbour-transitive extension} of $W$ is an $(X,2)$-neighbour-transitive code $C$ containing $\b 0$ such that $T_W\leq X$ and $K=K_W$, where $K=X\cap B$, $T_W$ is the group of translations by elements of $W$ and $K_W$ is the stabiliser of $W$ in $K$. Note that $T_W\leq X$ and $\b 0\in C$ means that $W\subseteq C$. If $C\neq W$ then the extension is said to be \emph{non-trivial}.
\end{definition}

Identify $V=\F_p^{dm}$ with the vertex set of the Hamming graph $H(m,q)$, where $q=p^d$. The main result for this chapter classifies all $(X,2)$-neighbour-transitive extensions of $W$, supposing $W$ is a $k$-dimensional $\F_p$-subspace of $V$, where $k\leq d$.

\begin{theorem}\label{onedimensionaltheorem}
 Let $V=\F_p^{dm}$ be the vertex set of the Hamming graph $H(m,p^d)$ and $C$ be an $(X,2)$-neighbour-transitive extension of $W$, where $C$ has minimum distance $\delta\geq 5$ and $W$ is an $\F_p$-subspace of $V$ with $\F_p$-dimension $k\leq d$. Then $p=2$, $d=1$, $W$ is the binary repetition code in $H(m,2)$, and one of the following holds:
 \begin{enumerate}
  \item $C=W$, with $\delta=m$;
  \item $C=\H$, where $\H$ is the Hadamard code of length $12$, as in Definition~\ref{hadamarddef}, with $\delta=6$; or,
  \item $C=\P$, where $\P$ is the punctured code of the Hadamard code of length $12$, as in Definition~\ref{hadamarddef}, with $\delta=5$.
 \end{enumerate}
\end{theorem}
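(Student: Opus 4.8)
The plan is to work from the bottom up, using the fact that $W\subseteq C$ is a block of imprimitivity of size $p^k$ for the action of $X$ on $C$, together with the strong constraints imposed by $2$-neighbour-transitivity and the minimum distance bound $\delta\geq5$. First I would extract the purely combinatorial consequences of $K=K_W$ and $T_W\leq X$: since $W$ is an $\F_p$-subspace with $\dim_{\F_p}W=k\leq d$ sitting inside a single coordinate position's worth of structure (the entries of codewords of $W$ vary only in a controlled way), the minimum weight of $W$ must already be at least $\delta$, and the nonzero codewords of $W$ have a very restricted weight distribution. The key first step is to pin down $W$ itself: I expect that the condition ``$W$ is an $\F_p$-subspace of dimension $k\le d$'' forces $W$ to be (equivalent to) a repetition-type code, and that the $2$-neighbour-transitivity of the extension $C$ forces the translation action to be compatible only when $p=2$, $d=1$, so that $W$ is the binary repetition code of length $m$. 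This is where I would lean on Propositions~\ref{ihom} and \ref{x12trans} and the affine hypothesis: the stabiliser $X_i^{Q_i}$ is $2$-transitive affine of degree $q=p^d$, and the requirement that this be consistent with $K=K_W$ acting as it does on the small subcode $W$ should eliminate all $(p,d)\neq(2,1)$.

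Once $p=2$, $d=1$ and $W$ is the binary repetition code $\{\b 0,\b 1\}$ in $H(m,2)$, the problem becomes: classify binary $(X,2)$-neighbour-transitive codes $C$ with $\delta\geq5$ containing the all-ones vector, such that $C$ is invariant under the translation $T_W$ (i.e.\ $C$ is closed under complementation) and the kernel $K$ of the action on entries is exactly $\{1,T_{\b1}\}$. The natural next step is to pass to the quotient: since $C$ is closed under complementation, $C/W$ sits inside $H(m,2)/W$, and the action of $X/K$ on the $m$ entries is faithful and transitive. I would then analyse $C_1$ and $C_2$. Transitivity on $C_1$ forces all codewords to have the same weight distribution of neighbours, and combined with $\delta\geq5$ this gives sharp counting identities (à la the sphere-packing / Delsarte bounds used for completely regular codes); transitivity on $C_2$ adds a second layer of such identities. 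At this stage the argument should resemble those in \cite{ef2nt} and \cite{aas2nt}: the entry-faithful quotient action is $2$-transitive on $M$ (by Proposition~\ref{x12trans}), so $X/K$ is one of the known $2$-transitive groups, and one runs through the list.

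The decisive constraints are the length $m$ and the minimum distance $\delta\geq5$. Because $C$ is closed under complementation with $\delta\geq5$, every codeword has weight in $\{0\}\cup[\delta,m-\delta]\cup\{m\}$ (the latter from $\b1\in C$), which is very restrictive for small $m$; meanwhile neighbour-transitivity forces $m$ to be small relative to $|C|$ via the counting identities. I expect the surviving possibilities for the $2$-transitive group $X/K$ and the length $m$ to be a short list, and that checking each one (using that $\Aut(C)$ must genuinely act as claimed on $C_0,C_1,C_2$) leaves exactly $W$ itself ($\delta=m$, the trivial extension), the length-$12$ Hadamard code $\H$ ($\delta=6$), and its puncturing $\P$ ($\delta=5$). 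Verifying that $\H$ and $\P$ really are $(X,2)$-neighbour-transitive extensions of $W$ — i.e.\ exhibiting the group and checking $K=K_W$ — is a finite computation I would cite from the relevant definitions and prior work.

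The main obstacle I anticipate is the first reduction step, namely proving that $p=2$ and $d=1$ and that $W$ must be the repetition code. The definition of an $(X,2)$-neighbour-transitive \emph{extension} bundles together several conditions ($T_W\leq X$, $K=K_W$, $\b0\in C$) whose interaction with the affine $2$-transitive action on the alphabet is subtle; ruling out, say, $p$ odd or $k<d$ will require carefully tracking how $K_W$ sits inside $K\cong\s_q^m$ and how the point stabiliser $X_i$ induces the affine group on $Q_i$, and showing these are incompatible unless the alphabet is binary. Once past that, the remainder is a (lengthy but standard) case analysis of $2$-transitive groups constrained by the weight and counting conditions, of the type already carried out in the companion papers.
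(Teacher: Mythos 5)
Your overall strategy --- first force $W$ to be the binary repetition code, then classify the non-trivial extensions by appealing to the classification of $2$-transitive groups --- matches the paper's at the coarsest level, but two essential ideas are missing, and without them the argument does not close.

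First, the reduction to $p=2$, $d=1$: you correctly flag this as the main obstacle, but you offer no mechanism for it. The paper's route is to observe that $W$ is a block of imprimitivity for $X$ on $C$ (since $K=K_W$ is normal in $X$ and $T_W\leq K_W$ is transitive on $W$, so $W$ is a $K$-orbit), whence $W$ is itself $(X_W,2)$-neighbour-transitive (Lemma~\ref{blockofimpis2nt} and Corollary~\ref{Wis2nt}); then Lemma~\ref{xtpitrans}(1) --- transitivity of the two-point stabiliser $\Xtp$ on $Q_j^\times$ --- combined with the cardinality bound $|W|=p^k\leq p^d=q$ forces the minimum distance of $W$ to equal $m$, and a $2$-regular code with $\delta=m$ containing $\b 0$ must be the binary repetition code. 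Your appeal to Proposition~\ref{x12trans} and ``compatibility of the translation action'' does not obviously produce this; in particular, nothing in your sketch rules out $W$ being a $q$-ary repetition code with $q=p^d>2$, which is precisely the case the counting argument of Lemma~\ref{onedimbinaryrep} is designed to eliminate.

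Second, and more seriously: ``run through the list of $2$-transitive groups'' is not a finite task as you have set it up. That list contains $\s_m$ and $\alt_m$ for every $m$ as well as several infinite families, and weight-distribution identities alone will not dispose of them. The paper makes the case analysis finite by studying the \emph{pair} $X_W^M\leq X^M$ (the stabiliser of the block $W$ versus the full group on entries, with $|C|/|W|=|X^M:X_W^M|$): it proves that $X_W^M$ is $2$-homogeneous while $X^M$ is $2$-transitive (Lemma~\ref{2transitive}), that their socles are distinct (Lemma~\ref{unequalsocles}, a genuinely delicate argument via the quotient of $H(m,2)$ by complementation together with Hering's theorem), and that $\soc(X^M)\neq\alt_m$ (Lemma~\ref{onedimsocnotam}, playing Bochert's bound on the index of a primitive subgroup against the Singleton bound). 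Only after these three lemmas does one obtain the finite list of candidate pairs in Table~\ref{2tr2hom}, which the final case analysis then works through. Your sketch never identifies $X_W^M$ as the second group in play, so the decisive socle constraints are absent; supplying them constitutes the bulk of the proof.
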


A corollary of Theorem~\ref{onedimensionaltheorem} regarding completely transitive codes is stated below. This result was originally proved in \cite[Theorem~10.2]{neilphd} using somewhat different methods, with the problem first being posed in \cite[Problem~6.5.4]{michealmast}. The group $\Diag_m(G)$, where $G\leq \Sym(Q)$, is defined in Section~\ref{hamminggraphautoprelim}.

\begin{corollary}\label{comptranscorr}
 Let $C$ be an $X$-completely transitive code in $H(m,2)$ with minimum distance $\delta\geq 5$ such that $K=X\cap B=\Diag_m(S_2)$. Then $C$ is equivalent to one of the codes appearing in Theorem~\ref{onedimensionaltheorem}, each of which is indeed completely transitive.
\end{corollary}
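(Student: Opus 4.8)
The plan is to derive Corollary~\ref{comptranscorr} from Theorem~\ref{onedimensionaltheorem} by verifying that an $X$-completely transitive code $C$ in $H(m,2)$ with $\delta\geq 5$ and $K=X\cap B=\Diag_m(S_2)$ satisfies the hypotheses of the theorem, after a suitable translation. First I would note that, since complete transitivity means $C$ is $(X,\rho)$-neighbour-transitive and $\rho\geq\delta-1\geq 4\geq 2$ (using the standard bound $\rho\geq\lfloor(\delta-1)/2\rfloor$ together with the fact that $2$-neighbour-transitivity already forces enough of the distance partition to be nontrivial), $C$ is in particular $(X,2)$-neighbour-transitive. Moreover $\Diag_m(S_2)$ is transitive on the $m$ entries and its induced action $X_i^{Q_i}$ on the alphabet $Q_i=\F_2$ is the (affine, $2$-transitive) group $S_2$, so $C$ is $X$-alphabet-affine with nontrivial kernel; this places us squarely in the regime addressed by Theorem~\ref{onedimensionaltheorem} rather than the entry-faithful or alphabet-almost-simple cases handled in \cite{ef2nt,aas2nt}.

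The second step is to arrange that $C$ is an $(X,2)$-neighbour-transitive extension of some subspace $W$ in the sense of Definition~\ref{xntextensiondef}. Since $X$ acts transitively on $C$ we may translate $C$ so that $\b 0\in C$ (replacing $X$ by its conjugate, which does not affect complete transitivity or the isomorphism type of $C$). With $p=2$, $d=1$, the kernel $K=\Diag_m(S_2)$ is precisely the group of coordinate-wise sign changes that is ``diagonal'', i.e. $K=\{\mathrm{id},\tau\}$ where $\tau$ is the permutation of $V=\F_2^m$ induced by adding the all-ones vector $\b 1$; equivalently $K=T_W$ for $W=\langle\b 1\rangle$, the binary repetition code of length $m$. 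Then $K_W$, the stabiliser of $W$ in $K$, equals $K$ itself (as $K$ fixes $W$ setwise), so $K=K_W$; and $T_W=K\leq X$. Since $\b 0\in C$ and $T_W\leq X$ forces $W=\b 0+W\subseteq C$, all the requirements of Definition~\ref{xntextensiondef} are met with $W$ the $1$-dimensional repetition code, so $k=1\leq d=1$ as required. Now Theorem~\ref{onedimensionaltheorem} applies directly and yields that $C$ is one of $W$, $\H$, or $\P$.

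The third and final step is to check the converse assertion — that each of the three codes in Theorem~\ref{onedimensionaltheorem} is in fact completely transitive with the prescribed kernel. For $W$ the repetition code this is classical: its automorphism group is $\Sym(m)\times S_2$ acting with the diagonal $S_2$ as kernel on entries, and the distance partition has the repetition code, then the weight-($1$ or $m-1$) vertices, and so on, each an $\Aut(W)$-orbit, so $W$ is completely transitive. For $\H$ (the length-$12$ Hadamard code) and $\P$ (its puncturing) one invokes the known structure of their automorphism groups — related to $M_{12}$ and $M_{11}$ respectively — together with a direct description of their (short) distance partitions; these verifications are standard and likely already recorded in the references, in particular \cite{Gill2017,neilphd}. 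I expect the only genuine subtlety, and hence the main thing to get right, is the bookkeeping in the second step: confirming that the hypothesis $K=\Diag_m(S_2)$ is exactly equivalent to ``$K=K_W$ with $W$ the repetition code'' so that the completely transitive code really is an extension of $W$ in the technical sense of Definition~\ref{xntextensiondef}, rather than of some larger or differently-embedded subspace; once that identification is clean, the corollary is immediate from the theorem.
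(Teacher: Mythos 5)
Your proposal is correct and reaches the result by the same overall strategy as the paper: translate $C$ so that $\b 0\in C$, verify the hypotheses of Theorem~\ref{onedimensionaltheorem}, apply the theorem, and then confirm that each of the three outcomes is completely transitive. The one place where your route genuinely differs is the identification of the subcode $W$. You observe directly that the non-identity element of $\Diag_m(S_2)$ acts on $V=\F_2^m$ as addition of the all-ones vector, so that $K=\Diag_m(S_2)=T_W$ with $W=\langle\b 1\rangle$ the repetition code, and then check the conditions of Definition~\ref{xntextensiondef} ($T_W\leq X$, $K=K_W$, $\b 0\in C$ forcing $W\subseteq C$) by hand; this is an entirely elementary computation. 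The paper instead routes through Lemma~\ref{blockofimpis2nt}: the orbit $\Delta=\b 0^K$ is a block of imprimitivity, hence an $(X_\Delta,2)$-neighbour-transitive (so $2$-regular) code of size $2$ and minimum distance $m$, which by \cite[Lemma~2.15]{ef2nt} must be the binary repetition code. Your version is shorter and self-contained; the paper's version is the argument that would survive a weaker or more general hypothesis on $K$, which is presumably why it is phrased that way.

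Two small points to tidy. First, the inequality ``$\rho\geq\delta-1\geq 4$'' is not what the bound you quote gives: $\lfloor(\delta-1)/2\rfloor=2$ when $\delta=5$, and $\rho\geq\delta-1$ is false in general (perfect codes attain $\rho=\lfloor(\delta-1)/2\rfloor$). All you need is $\rho\geq 2$, i.e.\ $C_2\neq\emptyset$, which does follow because $\delta\geq 5$ guarantees that any vertex at distance $2$ from a codeword has every other codeword at distance at least $3$, hence lies in $C_2$. Second, for the complete transitivity of $\H$ and $\P$ the reference the paper uses is \cite[Theorem~1.1]{Gillespie20131394} (with \cite{neilphd} for the original treatment); your direct verification for the repetition code is fine as stated.
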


Section~\ref{prelimsect} introduces the notation used throughout the paper and Section~\ref{extsection} proves the main results.

\section{Notation and preliminaries}\label{prelimsect}

Let the \emph{set of entries} $M$ and the \emph{alphabet} $Q$ be sets of sizes $m$ and $q$, respectively, both integers at least $2$. The vertex set $V\varGamma$ of a Hamming graph $\varGamma=H(m,q)$ consists of all functions from the set $M$ to the set $Q$, usually expressed as $m$-tuples. Let $Q_i\cong Q$ be the copy of the alphabet in the entry $i\in M$ so that the vertex set of $H(m,q)$ is identified with the product 
\[
 V\varGamma=\prod_{i\in M}Q_i.
\] 
An edge exists between two vertices if and only if they differ as $m$-tuples in exactly one entry. Note that $S^\times$ will denote the set $S\setminus \{0\}$ for any set $S$ containing $0$. In particular, $Q$ will usually be a vector-space here, and hence contains the zero vector. A code $C$ is a subset of $V\varGamma$. If $\alpha$ is a vertex of $H(m,q)$ and $i\in M$ then $\alpha_i$ refers to the value of $\alpha$ in the $i$-th entry, that is, $\alpha_i\in Q_i$, so that $\alpha=(\alpha_1,\ldots,\alpha_m)$ when $M=\{1,\ldots,m\}$.  For more in depth background material on coding theory see \cite{cameron1991designs} or \cite{macwilliams1978theory}.

Let $\alpha,\beta$ be vertices and $C$ be a code in a Hamming graph $H(m,q)$ with $0\in Q$ a distinguished element of the alphabet. A summary of important notation regarding codes in Hamming graphs is contained in Table~\ref{hammingnotation}.
\begin{table}
 \begin{center}
 \begin{tabular}{cp{7 cm}}
  Notation & Explanation\\
  \hline
  $\b 0$ & vertex with $0$ in each entry\\
  
  $(a^k,0^{m-k})$ & vertex with $a\in Q$ first $k$ entries and $0$ otherwise\\
  
  $\diff(\alpha,\beta)=\{i\in M\mid \alpha_i\neq\beta_i\}$ & set of entries in which $\alpha$ and $\beta$ differ\\
  
  $\supp(\alpha)=\{i\in M\mid \alpha_i\neq 0\}$ & support of $\alpha$\\
  
  $\wt(\alpha)=|\supp(\alpha)|$ & weight of $\alpha$\\
  
  $d(\alpha,\beta)=|\diff(\alpha,\beta)|$ & Hamming distance\\
  
  $\varGamma_s(\alpha)=\{\beta\in V\varGamma \mid d(\alpha,\beta)=s\}$ & set of $s$-neighbours of $\alpha$\\
  
  $\delta=\min\{d(\alpha,\beta)\mid \alpha,\beta\in C,\alpha\neq\beta\}$ & minimum distance of $C$\\
  
  $d(\alpha,C)=\min\{d(\alpha,\beta) \mid \beta\in C\}$ & distance from $\alpha$ to $C$\\
  
  $\rho =\max\{d(\alpha,C)\mid\alpha\in V\varGamma\}$ & covering radius of $C$\\
  
  $C_s=\{\alpha\in V\varGamma \mid d(\alpha,C)=s\}$ & set of $s$-neighbours of $C$\\
  
  $\{C=C_0,C_1,\ldots, C_\rho\}$ & distance partition of $C$\\
    
  \hline
 \end{tabular}
 \caption{Hamming graph notation.}
 \label{hammingnotation}
 \end{center}
\end{table}

Note that if the minimum distance $\delta$ of a code $C$ satisfies $\delta\geq 2s$, then the set of $s$-neighbours $C_s$ satisfies $C_s=\cup_{\alpha\in C}\varGamma_s(\alpha)$ and if $\delta\geq 2s+1$ this is a disjoint union. This fact is crucial in many of the proofs below; it is often assumed that $\delta\geq 5$, in which case every element of $C_2$ is distance $2$ from a unique codeword.

A \emph{linear} code is a code $C$ in $H(m,q)$ with alphabet $Q=\F_q$ a finite field, so that the vertices of $H(m,q)$ from a vector space $V$, such that $C$ is an $\F_q$-subspace of $V$. Given $\alpha,\beta\in V$, the usual inner product is given by $\langle\alpha,\beta\rangle=\sum_{i\in M}\alpha_i\beta_i$. The \emph{dual} code of $C$ is $C^\perp=\{\beta\in V\mid \forall \alpha\in C,\langle\alpha,\beta\rangle=0\}$.

The \emph{Singleton bound} (see \cite[4.3.2]{delsarte1973algebraic}) is a well known bound for the size of a code $C$ in $H(m,q)$ with minimum distance $\delta$, stating that $|C|\leq q^{m-\delta+1}$. For a linear code $C$ this may be stated as $\delta^\perp-1\leq k\leq m-\delta+1$, where $k$ is the dimension of $C$, $\delta$ is the minimum distance of $C$ and $\delta^\perp$ is the minimum distance of $C^\perp$.

A vertex or an entire code from a Hamming graph $H(m,q)$ may be projected into a smaller Hamming graph $H(k,q)$. For a subset $J=\{j_1,\ldots,j_k\}\subseteq M$ the \emph{projection of $\alpha$}, with respect to $J$, is $\pi_J(\alpha)=(\alpha_{j_1},\ldots,\alpha_{j_k})$. For a code $C$ the \emph{projection of $C$}, with respect to $J$, is $\pi_J(C)=\{\pi_J(\alpha)\mid \alpha\in C\}$.

\subsection{Automorphisms of a Hamming graph}\label{hamminggraphautoprelim}

The automorphism group $\Aut(\varGamma)$ of the Hamming graph is the semi-direct product $B\rtimes L$, where $B\cong \Sym(Q)^m$ and $L\cong \Sym(M)$ (see \cite[Theorem 9.2.1]{brouwer}). Note that $B$ and $L$ are called the \emph{base group} and the \emph{top group}, respectively, of $\Aut(\varGamma)$. Since we identify $Q_i$ with $Q$, we also identify $\Sym(Q_i)$ with $\Sym(Q)$. If $h\in B$ and $i\in M$ then $h_i\in \Sym(Q_i)$ is the image of the action of $h$ in the entry $i\in M$. Let $h\in B$, $\sigma\in L$ and $\alpha\in V\varGamma$. Then $h$ and $\sigma$ act on $\alpha$ explicitly via: 
\begin{equation*}
\alpha^h =(\alpha_1^{h_1},\ldots,\alpha_m^{h_m})\quad\text{and}\quad
\alpha^\sigma=(\alpha_{1{\sigma^{-1}}},\ldots,\alpha_{m{\sigma^{-1}}}).
\end{equation*}
The automorphism group of a code $C$ in $\varGamma=H(m,q)$ is $\Aut(C)=\Aut(\varGamma)_C$, the setwise stabiliser of $C$ in $\Aut(\varGamma)$. 

A group acting on a set $\varOmega$ with an element or subset of $\varOmega$ appearing as a subscript denotes a setwise stabiliser subgroup, and if the subscript is a set in parantheses it is a point-wise stabiliser subgroup. A group with a set appearing as a superscript denotes the subgroup of the symmetric group on the set induced by the group. (For more background and notation on permutation groups see, for instance, \cite{dixon1996permutation}.) In particular, let $X$ be a subgroup of $\Aut(\varGamma)$. Then the \emph{action of $X$ on entries} is the subgroup $X^M$ of $\Sym(M)$ induced by the action of $X$ on $M$. Note that an element of the pre-image, inside $X$, of an element of $X^M$ does not necessarily fix any vertex of $H(m,q)$. The kernel of the action of $X$ on entries is denoted $K$ and is precisely the subgroup of $X$ fixing $M$ point-wise, that is, $K=X_{(M)}=X\cap B$. The subgroup of $\Sym(Q_i)$ induced on the alphabet $Q_i$ by the action of the stabiliser $X_i\leq X$ of the entry $i\in M$ is denoted $X_i^{Q_i}$. When $X^M$ is transitive on $M$, the group $X_i^{Q_i}$ is sometimes referred to as the \emph{action on the alphabet}. 

Given a group $H\leq \Sym(Q)$ an important subgroup of $\Aut(\varGamma)$ is the \emph{diagonal} group of $H$, denoted $\Diag_m(H)$, where an element of $H$ acts the same in each entry. Formally, define $g_h$ to be the element of $B$ with $(g_h)_i=h$ for all $i\in M$, and $\Diag_m(H)=\{g_h \mid h\in H\}$. 

It is worth mentioning that coding theorists often consider more restricted groups of automorphisms, such as the group $\PermAut(C)=\{\sigma\mid h\sigma\in\Aut(C), h=1\in B, \sigma\in L\}$. The elements of this group are called \emph{pure permutations} on the entries of the code. 

Two codes $C$ and $C'$ in $H(m,q)$ are said to be \textit{equivalent} if there exists some $x\in \Aut(\varGamma)$ such that $C^x=\{\alpha^x\mid\alpha\in C\}=C'$. Equivalence preserves many of the important properties in coding theory, such as minimum distance and covering radius, since $\Aut(\varGamma)$ preserves distances in $H(m,q)$.

\subsection{\texorpdfstring{$s$}{s}-Neighbour-transitive codes}\label{sntr}

This section presents preliminary results regarding $(X,s)$-neighbour-transitive codes, defined in Definition~\ref{sneighbourtransdef}. The next results give certain $2$-homogeneous and $2$-transitive actions associated with an $(X,2)$-neighbour-transitive code.

\begin{proposition}\cite[Proposition~2.5]{ef2nt}\label{ihom} 
 Let $C$ be an $(X,s)$-neighbour-transitive code in $H(m,q)$ with minimum distance $\delta$, where $\delta\geq 3$ and $s\geq 1$. Then for $\alpha\in C$ and $i\leq\min\{s,\lfloor\frac{\delta-1}{2}\rfloor\}$, the stabiliser $X_\alpha$ fixes setwise and acts transitively on $\varGamma_i(\alpha)$.  In particular, the action of $X_\alpha$ on $M$ is $i$-homogeneous.    
\end{proposition}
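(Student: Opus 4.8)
The plan is to combine the transitivity of $X$ on $C_i$ with the elementary fact that, once $i\leq\lfloor(\delta-1)/2\rfloor$, each vertex of $H(m,q)$ has a \emph{unique} nearest codeword among the codewords lying within distance $\lfloor(\delta-1)/2\rfloor$. First I would record this uniqueness: if $\beta,\beta'\in C$ both lie within distance $\lfloor(\delta-1)/2\rfloor$ of some vertex $\gamma$, then $d(\beta,\beta')\leq 2\lfloor(\delta-1)/2\rfloor\leq\delta-1<\delta$, forcing $\beta=\beta'$. Consequently, for $\alpha\in C$ and $i\leq\lfloor(\delta-1)/2\rfloor$, every $\gamma\in\varGamma_i(\alpha)$ has $d(\gamma,C)=i$, so $\varGamma_i(\alpha)\subseteq C_i$, and moreover $\alpha$ is the unique codeword with $d(\gamma,\alpha)=i$.

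Now fix $\alpha\in C$ and $i\leq\min\{s,\lfloor(\delta-1)/2\rfloor\}$. Since $X_\alpha$ fixes $\alpha$ and automorphisms of $H(m,q)$ preserve Hamming distance, $X_\alpha$ fixes $\varGamma_i(\alpha)$ setwise. For transitivity, let $\gamma,\gamma'\in\varGamma_i(\alpha)$. By the previous paragraph both lie in $C_i$, and $X$ is transitive on $C_i$ (this uses $i\leq s$), so there is $x\in X$ with $\gamma^x=\gamma'$. Then $\alpha^x\in C$ because $x\in\Aut(C)$, and $d(\gamma',\alpha^x)=d(\gamma^x,\alpha^x)=d(\gamma,\alpha)=i$; by the uniqueness above, $\alpha^x=\alpha$, i.e.\ $x\in X_\alpha$. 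Hence $X_\alpha$ is transitive on $\varGamma_i(\alpha)$.

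For the final claim, consider the map $\varGamma_i(\alpha)\to\binom{M}{i}$ given by $\gamma\mapsto\diff(\alpha,\gamma)$. It is well defined since $d(\alpha,\gamma)=i$, it is surjective because $q\geq 2$ (given an $i$-subset $S\subseteq M$, alter $\alpha$ on each entry of $S$ to any other alphabet value to obtain a preimage), and it is $X_\alpha$-equivariant, since $\diff(\alpha,\gamma)^x=\diff(\alpha^x,\gamma^x)=\diff(\alpha,\gamma^x)$ for $x\in X_\alpha$, where $x$ acts on $M$ through $X^M$. A surjective equivariant quotient of a transitive action is transitive, so $X_\alpha$ is transitive on $\binom{M}{i}$; that is, the action $X_\alpha^M$ is $i$-homogeneous.

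The argument is short, and the only delicate point is the bookkeeping around the nearest codeword: the hypothesis $i\leq\lfloor(\delta-1)/2\rfloor$ is used twice — to guarantee $\varGamma_i(\alpha)\subseteq C_i$, and to guarantee that no vertex of $C_i$ is at distance $i$ from two distinct codewords, which is precisely what upgrades $\alpha^x\in C$ to $\alpha^x=\alpha$ in the transitivity step.
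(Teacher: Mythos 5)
Your proof is correct and is essentially the argument behind \cite[Proposition~2.5]{ef2nt} (the paper cites the result without reproducing the proof): uniqueness of the nearest codeword within distance $\lfloor(\delta-1)/2\rfloor$ upgrades transitivity of $X$ on $C_i$ to transitivity of $X_\alpha$ on $\varGamma_i(\alpha)$, and the $X_\alpha$-equivariant surjection $\gamma\mapsto\diff(\alpha,\gamma)$ onto the $i$-subsets of $M$ gives $i$-homogeneity. All the delicate points (that $\varGamma_i(\alpha)\subseteq C_i$, and that $\alpha^x=\alpha$ follows from uniqueness) are handled correctly.
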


%

\begin{proposition}\cite[Proposition~2.7]{ef2nt}\label{x12trans} 
 Let $C$ be an $(X,1)$-neighbour-transitive code in $H(m,q)$ with minimum distance $\delta\geq 3$ and $|C|>1$.  Then $X_i^{Q_i}$ acts $2$-transitively on $Q_i$ for all $i\in M$.
\end{proposition}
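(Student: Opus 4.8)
The plan is to fix an entry $i\in M$, write $G:=X_i^{Q_i}$ for the induced alphabet group, and reduce the statement to two claims: \textbf{(a)} $G$ is transitive on $Q_i$, and \textbf{(b)} for some point $a\in Q_i$ the stabiliser $G_a$ is transitive on $Q_i\setminus\{a\}$. Since point stabilisers in a transitive group are conjugate, (a) and (b) together give $2$-transitivity. Throughout I would use that $\delta\geq 3$ forces $C_1=\bigcup_{\alpha\in C}\varGamma_1(\alpha)$ to be a disjoint union, so every neighbour $\beta\in C_1$ has a unique nearest codeword $\alpha$ and differs from it in a single entry $j$; I attach to $\beta$ the triple $(j,\alpha_j,\beta_j)$ with $\alpha_j\neq\beta_j$. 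A direct computation for $g=h\sigma\in X\leq B\rtimes L$ shows that $g$ sends this triple to $(j^\sigma,\alpha_j^{h_j},\beta_j^{h_j})$, recording the $X$-action on $C_1$ as an action on triples.

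First I would establish claim \textbf{(b)}. Consider the set $P_i$ of triples with first coordinate $i$; equivalently $P_i=\{(\alpha_i,b):\alpha\in C,\ b\in Q_i\setminus\{\alpha_i\}\}$, since for each codeword $\alpha$ and each $b\neq\alpha_i$ the vertex agreeing with $\alpha$ except for value $b$ in entry $i$ lies in $\varGamma_1(\alpha)\subseteq C_1$. Given two elements of $P_i$, transitivity of $X$ on $C_1$ supplies $g=h\sigma$ carrying one realising neighbour to the other; because both differ from their (unique) nearest codewords in entry $i$, the triple formula forces $i^\sigma=i$, so $g\in X_i$ and its image $h_i\in G$ effects the prescribed map on the pair. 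Hence $G$ is transitive on $P_i$. The fibres of the first-coordinate projection form a block system for this action, so the stabiliser of a fibre, namely $G_a$ for $a\in A_i:=\{\alpha_i:\alpha\in C\}$, is transitive on that fibre, i.e. on $Q_i\setminus\{a\}$; this is claim \textbf{(b)}. The same conclusion could instead be extracted from the transitivity of $X_\alpha$ on $\varGamma_1(\alpha)$ guaranteed by Proposition~\ref{ihom}.

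The hard part will be upgrading to transitivity of $G$ on all of $Q_i$, which fails precisely when $A_i$ is a single point, i.e. when entry $i$ is constant across $C$. To rule this out I would use $|C|>1$ together with entry-transitivity: Proposition~\ref{ihom}, applied with $s=1$ and $\delta\geq 3$, shows $X_\alpha$, and hence $X$, acts transitively on $M$. If entry $i$ were constant on $C$ with value $a_0$, then for any entry $j$ and any $g=h\sigma\in X$ with $i^\sigma=j$ the triple formula gives $(\gamma^g)_j=a_0^{h_i}$ for every $\gamma\in C$; since $g$ permutes $C$, entry $j$ is also constant on $C$, and entry-transitivity then forces every entry to be constant, whence $|C|=1$, a contradiction. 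Therefore $|A_i|\geq 2$, so for each $b\in Q_i$ there is $a\in A_i$ with $a\neq b$ and $(a,b)\in P_i$; the second-coordinate projection of $P_i$ is thus all of $Q_i$, and transitivity of $G$ on $P_i$ yields claim \textbf{(a)}. Combining \textbf{(a)} and \textbf{(b)} shows $X_i^{Q_i}$ is $2$-transitive on $Q_i$, and as the argument is uniform in $i$ this holds for every $i\in M$.
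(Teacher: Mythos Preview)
Your argument is correct. Note, however, that the present paper does not supply its own proof of this proposition: it is quoted verbatim from \cite[Proposition~2.7]{ef2nt} and used as a black box. So there is nothing in this paper to compare against; what you have written is a self-contained proof that could stand in place of the citation.

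A couple of minor remarks on presentation. First, your appeal to Proposition~\ref{ihom} for transitivity of $X$ on $M$ is legitimate (with $s=1$ and $\delta\geq 3$ one gets $1$-homogeneity, i.e.\ transitivity, of $X_\alpha^M$), but you should flag that Proposition~\ref{ihom} itself does not require $|C|>1$; you only need $|C|>1$ at the point where you derive a contradiction from all entries being constant. Second, once you have shown that $G$ acts transitively on $P_i=\{(a,b):a\in A_i,\ b\neq a\}$ via the diagonal action on $Q_i\times Q_i$, you do not strictly need the block-system language for claim~(b): the first-coordinate projection is $G$-equivariant onto $A_i$, so the stabiliser of $a\in A_i$ is automatically transitive on the fibre $\{a\}\times(Q_i\setminus\{a\})$. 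What you wrote is fine, just slightly heavier than necessary.
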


The next result gives information about the order of the stabiliser of a codeword in the automorphism group of a $2$-neighbour-transitive code and is a strengthening of \cite[Lemma~2.10]{ef2nt}.

\begin{lemma}\label{xtpitrans}
 Let $C$ be an $(X,2)$-neighbour-transitive code in $H(m,q)$ with $\delta\geq 5$ and ${\b 0}\in C$, and let $i,j\in M$ be distinct. Then the following hold:
 \begin{enumerate}
  \item The stabiliser $\Xtp$ acts transitively on each of the sets $Q_i^\times$ and $Q_j^\times$.
  \item Moreover, $\Xtp$ has at most two orbits on $\QQ$, and if $\Xtp$ has two orbits on $\QQ$ then both orbits are the same size and $X_{\b 0}$ acts $2$-transitively on $M$.
  \item The order of $X_{\b 0}$, and hence $|X|$, is divisible by $\binom{m}{2} (q-1)^2$.
  \item If $|X_{\b 0}|=\binom{m}{2}$ then $q=2$.
 \end{enumerate}
\end{lemma}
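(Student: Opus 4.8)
\textbf{Proof proposal for Lemma~\ref{xtpitrans}.}

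The plan is to establish the four parts in order, using the preceding propositions as the main engine. For part~(1), I would start from Proposition~\ref{ihom} applied with $s=2$ and $\delta\geq 5$: the stabiliser $X_{\b 0}$ acts transitively on $\varGamma_1({\b 0})$ and on $\varGamma_2({\b 0})$. The vertices of $\varGamma_2({\b 0})$ with support $\{i,j\}$ are exactly those of the form $\alpha$ with $\alpha_i\in Q_i^\times$, $\alpha_j\in Q_j^\times$, and transitivity of $X_{\b 0}$ on $\varGamma_2({\b 0})$ together with the fact (Proposition~\ref{ihom}) that $X_{\b 0}$ acts $2$-homogeneously (indeed the orbit structure on supports is controlled) should let me deduce that $\Xtp$ — the stabiliser in $X_{\b 0}$ of the pair of entries $i,j$ (setwise or pointwise as the notation $X_{{\b 0},i,j}$ dictates) — acts transitively on the set of such $\alpha$, i.e.\ on $\QQ$ or on a union of at most two orbits; restricting to a single coordinate then gives transitivity on each of $Q_i^\times$ and $Q_j^\times$. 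Alternatively one can feed the weight-$1$ neighbours $\varGamma_1({\b 0})$ through Proposition~\ref{x12trans} to get $2$-transitivity of $X_i^{Q_i}$ on $Q_i$, hence transitivity of the stabiliser of $0\in Q_i$ on $Q_i^\times$, and then track which automorphisms also fix entry $j$; but the cleaner route is via $\varGamma_2({\b 0})$.

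For part~(2), the key object is the action of $\Xtp$ on the set $S$ of weight-$2$ codewords-neighbours with support $\{i,j\}$, which is in bijection with $\QQ$. Since $X_{\b 0}$ is transitive on all of $\varGamma_2({\b 0})$ and, by Proposition~\ref{ihom}, $X_{\b 0}$ is $2$-homogeneous on $M$, the orbits of $X_{\b 0}$ on $\varGamma_2({\b 0})$ restricted to a fixed support $\{i,j\}$ are permuted by $X_{{\b 0},\{i,j\}}$ via the setwise-vs-pointwise index; the number of $\Xtp$-orbits on $\QQ$ is therefore $1$ or $2$, the latter occurring precisely when the element of $X_{{\b 0},\{i,j\}}$ swapping $i$ and $j$ fuses them — and in that case the two orbits have equal size because they are swapped by such an element. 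To get the ``$2$-transitive on $M$'' conclusion in the two-orbit case, I would argue that if $\Xtp$ (the pointwise stabiliser of $\{i,j\}$) does not fuse $(a,b)$ and $(b,a)$-type pairs, the discrepancy is resolved only by an automorphism moving $i,j$; combined with transitivity of $X_{\b 0}$ on ordered support pairs (which follows because $X_{\b 0}$ is transitive on $\varGamma_2({\b 0})$ and each weight-$2$ neighbour singles out an ordered-or-unordered pair), one upgrades $2$-homogeneity to $2$-transitivity on $M$. This fusion bookkeeping is the part I expect to be the main obstacle: one must be careful about the exact meaning of the double subscript $X_{{\b 0},i,j}$ (pointwise stabiliser of both entries) versus the setwise stabiliser $X_{{\b 0},\{i,j\}}$, and about when an alphabet symbol in $Q_i$ can only be matched to one in $Q_j$ by also permuting the entries.

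Parts~(3) and~(4) are then short deductions. For part~(3): by Proposition~\ref{ihom}, $X_{\b 0}$ is $2$-homogeneous, hence transitive on the $\binom{m}{2}$ unordered pairs of entries, so $\binom{m}{2}$ divides $|X_{\b 0}|/|X_{{\b 0},\{i,j\}}|$; and by part~(1) the further stabiliser of the support acts transitively on $Q_i^\times\times Q_j^\times$ up to the at-most-two-orbits ambiguity of part~(2), giving a factor of $(q-1)^2$ (if there are two equal orbits of size $(q-1)^2/2$ then the index picks up $(q-1)^2/2$ and a further factor $2$ from the entry-swap, so $(q-1)^2$ divides the order either way). Multiplying, $\binom{m}{2}(q-1)^2 \mid |X_{\b 0}|$, and $|X_{\b 0}| \mid |X|$ since $X_{\b 0}\leq X$. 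For part~(4): if $|X_{\b 0}|=\binom{m}{2}$ then part~(3) forces $(q-1)^2 \mid 1$ (after dividing by $\binom{m}{2}$, noting $\binom{m}{2}\neq 0$ and that $C$ nontrivial forces $m\geq 2$ so $\binom{m}{2}\geq 1$), whence $q-1=1$, i.e.\ $q=2$. I would close by remarking that all divisibility statements use only that the relevant orbits are nonempty, which is guaranteed by $\delta\geq 5$ and ${\b 0}\in C$ (so that weight-$2$ neighbours with any prescribed support genuinely lie in $\varGamma_2({\b 0})\subseteq C_2$).
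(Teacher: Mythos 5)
Parts (2), (3) and (4) of your outline are essentially the paper's argument (your route to part (3) via the chain $X_{\b 0}\geq X_{{\b 0},\{i,j\}}\geq\Xtp$ is a little more roundabout than simply applying orbit--stabiliser to the transitive action of $X_{\b 0}$ on the set $\varGamma_2(\b 0)$ of size $\binom{m}{2}(q-1)^2$, but it is correct). The problem is part (1). Having established that $\Xtp$ has at most two orbits on $\QQ$, you assert that ``restricting to a single coordinate then gives transitivity on each of $Q_i^\times$ and $Q_j^\times$.'' That step does not follow: when there are two orbits on the product, nothing so far prevents them from being, say, $O_1\times Q_j^\times$ and $O_2\times Q_j^\times$ for a nontrivial $\Xtp$-invariant partition $Q_i^\times=O_1\cup O_2$, in which case $\Xtp$ would be intransitive on $Q_i^\times$. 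Your proposed alternative via Proposition~\ref{x12trans} has the same defect: transitivity of $X_{{\b 0},i}$ on $Q_i^\times$ does not survive passage to the further stabiliser of the entry $j$ without an additional argument.

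The missing ingredient, which is the real content of part (1), is a counting argument. In the two-orbit case part (2) already gives that $X_{\b 0}$ is $2$-transitive on $M$, so some element of $X_{{\b 0},\{i,j\}}$ swaps $i$ and $j$; conjugation by it normalises the pointwise stabiliser $\Xtp$ and carries $Q_i$ to $Q_j$, so $\Xtpi$ and $\Xtpj$ are permutation isomorphic and in particular have the same number $k$ of orbits. Since every $\Xtp$-orbit on $\QQ$ is contained in a product of an orbit on $Q_i^\times$ with an orbit on $Q_j^\times$, and every such product is a nonempty union of orbits, there are at least $k^2$ orbits on $\QQ$. By part (2) this forces $k^2\leq 2$, hence $k=1$, ruling out the asymmetric scenario above. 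You flagged the ``fusion bookkeeping'' as the likely obstacle but located it in part (2), where your argument (index at most $2$ of the pointwise in the setwise stabiliser, the two orbits being swapped by any element interchanging $i$ and $j$) is in fact sound; the genuine gap is this deduction of coordinatewise transitivity in part (1).
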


\begin{proof}
 Now $X_{\b 0}$ acts transitively on $\varGamma_2(\b 0)$, by Proposition~\ref{ihom}, since $\delta\geq 5$. Since $|\varGamma_2({\b 0})|=\binom{m}{2} (q-1)^2$, parts 3 and 4 hold. Also, we have that the stabiliser $X_{{\b 0},\{i,j\}}$ of the subset $\{i,j\}\subseteq M$ is transitive on the set of weight $2$ vertices with support $\{i,j\}$. Hence $\Xtp$ has at most two orbits on $\QQ$ and if there are two they have equal size. Note that if $\Xtp$ has one orbit on $\QQ$ then $\Xtp$ acts transitively on each of $Q_i^\times$ and $Q_j^\times$. Suppose that $\Xtp$ has two orbits on $\QQ$, and hence that $\Xtp\neq X_{{\b 0},\{i,j\}}$. By Proposition~\ref{ihom}, $X_{\b 0}$ acts $2$-homogeneously on $M$. Since $\Xtp\neq X_{{\b 0},\{i,j\}}$, we have that $X_{\b 0}$ is in fact $2$-transitive on $M$, proving part 2. Let $k$ be the number of $\Xtp$-orbits on $Q_i^\times$. Since $X_{\b 0}$ is $2$-transitive on $M$, it follows that $\Xtpi$ is permutation isomorphic to $\Xtpj$ and hence $\Xtp$ has the same number of orbits on each of $Q_i^\times$ and $Q_j^\times$. Since each orbit of $\Xtp$ on $\QQ$ is contained in the Cartesian product of an orbit on $Q_i^\times$ with an orbit on $Q_j^\times$, it follows that $\Xtp$ has at least  $k^2$ orbits on $\QQ$. However, $k\geq 2$ implies $k^2\geq 4$, contradicting part 2, and hence part 1 holds.
\end{proof}

The concept of a design, introduced below, comes up frequently in coding theory. Let $\alpha\in H(m,q)$ and $0\in Q$. A vertex $\nu$ of $H(m,q)$ is said to be \emph{covered} by $\alpha$ if $\nu_i=\alpha_i$ for every $i\in M$ such that $\nu_i\neq 0$. A binary design, obtained by setting $q=2$ in the below definition, is usually defined as a collection of subsets of some ground set, satisfying equivalent conditions where the concept of covering a vertex corresponds to containment of a subset. We refer to the latter structures as \emph{combinatorial designs}.

\begin{definition}\label{desdef}
 A \emph{$q$-ary $s$-$(v,k,\lambda)$ design} in $\varGamma=H(m,q)$ is a subset $\mathcal{D}$ of vertices of $\varGamma_k(\b 0)$ (where $k\geq s$) such that each vertex $\nu \in\varGamma_s(\b 0)$ is covered by exactly $\lambda$ vertices of $\mathcal{D}$. When $q=2$, $\D$ is simply the set of characteristic vectors of a combinatorial $s$-design. The elements of $\D$ are called \emph{blocks}.
\end{definition}

The following equations can be found, for instance, in \cite{stinson2004combinatorial}. Let $\D$ be a binary $s$-$(v,k,\lambda)$ design with $|\D|=b$ blocks and let $r$ be the number of blocks incident with a point. Then $vr=bk$, $r(k-1)=\lambda(v-1)$ and 
\begin{equation}
 b=\frac{v(v-1)\cdots(v-s+1)}{k(k-1)\cdots(k-s+1)}\lambda.\label{designparams}
\end{equation}

The definition below is required in order to state the remaining two results of this section.

\begin{definition}\label{regcodedefinition}
 Let $C$ be a code in $H(m,q)$ with covering radius $\rho$, and $s$ be an integer with $0\leq s\leq \rho$. Then,
 \begin{enumerate}
  \item $C$ is \emph{$s$-regular} if, for each $i\in\{0,1,\ldots,s\}$, each $k\in\{0,1,\ldots,m\}$, and every vertex $\nu\in C_i$, the number $|\varGamma_k(\nu)\cap C|$ depends only on $i$ and $k$, and,
  \item $C$ is \emph{completely regular} if $C$ is $\rho$-regular.
 \end{enumerate}
\end{definition}

\begin{lemma}\cite[Lemma~2.16]{ef2nt}\label{design}
 Let $C$ be an $(X,s)$-neighbour transitive code in $H(m,q)$. Then $C$ is $s$-regular. Moreover, id $C$ has with minimum distance $\delta\geq 2s$ and contains $\b 0$, then for each $k\leq m$ the set of codewords of weight $k$ forms a $q$-ary $s$-$(m,k,\lambda)$ design, for some $\lambda$.
\end{lemma}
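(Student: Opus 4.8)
The plan is to prove Lemma~\ref{design} in two stages, corresponding to its two assertions. For the first assertion, that an $(X,s)$-neighbour-transitive code is $s$-regular, I would fix $i\in\{0,1,\ldots,s\}$ and $k\in\{0,1,\ldots,m\}$, take two vertices $\nu,\nu'\in C_i$, and use $(X,s)$-neighbour-transitivity to find $x\in X$ with $\nu^x=\nu'$. Since $x\in X\leq\Aut(C)$ stabilises $C$ setwise and preserves Hamming distance, $x$ maps $\varGamma_k(\nu)\cap C$ bijectively onto $\varGamma_k(\nu')\cap C$, so $|\varGamma_k(\nu)\cap C|=|\varGamma_k(\nu')\cap C|$; this common value depends only on $i$ and $k$, which is exactly $s$-regularity.

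For the second assertion, assume in addition that $\delta\geq 2s$ and $\b 0\in C$. Fix $k\leq m$ and let $\D$ be the set of weight-$k$ codewords, viewed as a subset of $\varGamma_k(\b 0)$ (using $\b 0\in C$). I need to show $\D$ is a $q$-ary $s$-$(m,k,\lambda)$ design in the sense of Definition~\ref{desdef}: every $\nu\in\varGamma_s(\b 0)$ is covered by exactly $\lambda$ elements of $\D$, with $\lambda$ independent of $\nu$. The key observation is that $\b 0\in C_0$, and $X_{\b 0}$ acts transitively on $\varGamma_s(\b 0)$ by Proposition~\ref{ihom} (here one needs $s\leq\lfloor(\delta-1)/2\rfloor$, which follows since $\delta\geq 2s$ gives $\lfloor(\delta-1)/2\rfloor\geq\lfloor(2s-1)/2\rfloor=s-1$ only — so a little care is needed; in fact $\delta\geq 2s$ and $\delta$ being the minimum distance of a code forces one to check whether $\delta=2s$ causes a problem, but as noted in the text, when $\delta\geq 2s$ the set $C_s$ is still $\cup_{\alpha\in C}\varGamma_s(\alpha)$, and the relevant transitivity is on $\varGamma_s(\b 0)$ which Proposition~\ref{ihom} delivers for $i\leq\min\{s,\lfloor(\delta-1)/2\rfloor\}$, so I would instead argue directly that $X_{\b 0}$ is $s$-homogeneous on $M$ and use that). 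Concretely: let $\lambda(\nu)$ denote the number of blocks of $\D$ covering $\nu$. For $x\in X_{\b 0}$, since $x$ fixes $\b 0$, stabilises $C$, and preserves weights (as $\b 0^x=\b 0$), $x$ permutes $\D$; moreover $\nu$ is covered by $\beta\in\D$ if and only if $\nu^x$ is covered by $\beta^x\in\D$, so $\lambda(\nu)=\lambda(\nu^x)$. Since $X_{\b 0}$ is transitive on $\varGamma_s(\b 0)$, $\lambda(\nu)$ is constant, equal to some $\lambda$, and $\D\subseteq\varGamma_k(\b 0)$ with $k\geq s$ whenever $\D\neq\emptyset$ (if $\D=\emptyset$ then $\lambda=0$ trivially, or $k<s$ is vacuous). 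This is precisely the design condition.

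The main obstacle, and the only subtle point, is the interaction between the hypothesis $\delta\geq 2s$ and the hypothesis $i\leq\lfloor(\delta-1)/2\rfloor$ needed to invoke Proposition~\ref{ihom} for transitivity of $X_{\b 0}$ on $\varGamma_s(\b 0)$: when $\delta=2s$ exactly, one has $\lfloor(\delta-1)/2\rfloor=s-1<s$, so Proposition~\ref{ihom} as stated only gives $X_{\b 0}$ transitive on $\varGamma_{s-1}(\b 0)$ and $(s-1)$-homogeneous on $M$. To handle this I would instead derive $s$-homogeneity of $X_{\b 0}$ on $M$ from $(X,s)$-neighbour-transitivity combined with $\delta\geq 2s$ directly: the weight-$s$ vertices all lie in $C_s$ (as $\delta\geq 2s$ ensures distinct codewords have disjoint radius-$(s-1)$ balls in a suitable sense, so a weight-$s$ vertex is at distance exactly $s$ from $\b 0$ and no closer to any other codeword when $\delta>2s$, and when $\delta=2s$ a weight-$s$ vertex is at distance $s$ from $\b 0$ hence in $C_s$ unless it is equidistant from another codeword — but it still lies in $C_s$ since its distance to $C$ is at most $s$ and, by $\delta\geq 2s$, at least... ), so one must argue carefully that $X$ transitive on $C_s$ together with stabilising $\b 0$ forces transitivity on the weight-$s$ vertices covered appropriately; alternatively, since the statement of the lemma is quoted from \cite[Lemma~2.16]{ef2nt}, I would simply cite that reference for the delicate boundary case and present the clean argument above for $\delta>2s$, noting the boundary case is handled identically once one observes that $X_{\b 0}$ permutes $\varGamma_s(\b 0)$ and acts $s$-homogeneously on $M$, which suffices for the design conclusion.
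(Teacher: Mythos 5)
This lemma is quoted from \cite{ef2nt} and the present paper supplies no proof of it, so there is no internal argument to compare yours against; I assess your proposal on its own terms. Your proof of the first assertion is correct and complete: $X$ is transitive on each $C_i$ with $i\leq s$, and automorphisms of $C$ preserve both $C$ and Hamming distance, so $|\varGamma_k(\nu)\cap C|$ is constant on each $C_i$. Your proof of the second assertion is also correct whenever $\delta\geq 2s+1$: then $s\leq\lfloor(\delta-1)/2\rfloor$, so Proposition~\ref{ihom} gives transitivity of $X_{\b 0}$ on $\varGamma_s(\b 0)$, and since every $x\in X_{\b 0}$ fixes $0$ in each coordinate it preserves weights and the covering relation, whence $\lambda(\nu)$ is constant on $\varGamma_s(\b 0)$. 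This already covers every application made in this paper, where the design conclusion is only invoked with $s=2$ and $\delta\geq 5$.

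The genuine gap is the boundary case $\delta=2s$, which you flag but do not close. There $\lfloor(\delta-1)/2\rfloor=s-1$, so Proposition~\ref{ihom} only yields transitivity on $\varGamma_{s-1}(\b 0)$, and the direct argument fails for a concrete reason: a vertex $\nu\in\varGamma_s(\b 0)$ does lie in $C_s$ (for $\beta\in C\setminus\{\b 0\}$ one has $d(\nu,\beta)\geq\wt(\beta)-s\geq s$), but it may be at distance exactly $s$ from several codewords, so an element of $X$ carrying $\nu$ to $\nu'\in\varGamma_s(\b 0)$ need not carry $\b 0$ to $\b 0$. Your proposed workaround --- establish $s$-homogeneity of $X_{\b 0}$ on $M$ and assert that this ``suffices for the design conclusion'' --- does not repair this: first, $s$-homogeneity on $M$ is itself deduced in Proposition~\ref{ihom} from transitivity on $\varGamma_s(\b 0)$, so you have not established it here; second, even granted it, for $q>2$ the $q$-ary design condition quantifies over all of $\varGamma_s(\b 0)$, and controlling supports says nothing about two weight-$s$ vertices with the same support but different alphabet values, so constancy of $\lambda(\nu)$ does not follow. (The unfinished sentence in your proposal ending ``and, by $\delta\geq 2s$, at least\ldots'' is a symptom of exactly this unresolved point.) Either supply an argument that $X_\nu$ acts transitively on $\varGamma_s(\nu)\cap C$ for $\nu\in C_s$ (equivalently, that $X_{\b 0}$ is transitive on $\varGamma_s(\b 0)$ when $\delta=2s$), or restrict your claim to $\delta\geq 2s+1$ and cite \cite{ef2nt} for the remaining case --- but as written the proposal is not a self-contained proof of the statement.
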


\begin{definition}\cite[Definition~4.1]{ef2nt}\label{hadamarddef}
 Let $\mathcal{P}$ be the punctured Hadamard $12$ code, obtained as follows (see \cite[Part 1, Section 2.3]{macwilliams1978theory}). First, we construct a normalised Hadamard matrix $H_{12}$ of order $12$ using the Paley construction. 
 \begin{enumerate}
  \item Let $M=\F_{11}\cup \{*\}$ and let $H_{12}$ be the $12\times 12$ matrix with first row $v$, where $v_a=-1$ if $a$ is a square in $\F_{11}$ (including $0$), and $v_a=1$ if $a$ is a non-square in $\F_{11}$ or $a=*\in M$, taking the orbit of $v$ under the additive group of $\F_{11}$ acting on $M$ to form $10$ more rows and adding a final row, the vector $((-1)^{12})$. 
  \item The Hadamard code $\H$ of length $12$ in $H(12,2)$ then consists of the vertices $\alpha$ such that there exists a row $u$ in $H_{12}$ or $-H_{12}$ satisfying $\alpha_a=0$ when $u_a=1$ and $\alpha_a=1$ when $u_a=-1$. 
  \item The punctured code $\P$ of $\H$ is obtained by deleting the coordinate $*$ from $M$. The weight $6$ codewords of $\P$ form a binary $2$-$(11,6,3)$ design, which we denote throughout by $\D$. The code $\P$ consists of the following codewords: the zero codeword, the vector $(1^{11})$, the characteristic vectors of the $2$-$(11,6,3)$ design $\D$, and the characteristic vectors of the complement of that design, which forms a $2$-$(11,5,2)$ design. (Both $\D$ and its complement are unique up to isomorphism \cite{todd1933}.)
  \item The even weight subcode $\mathcal E$ of $\mathcal P$ is the code consisting of the zero codeword and the $2$-$(11,6,3)$ design. 
 \end{enumerate}
\end{definition}

\begin{proposition}\cite[Proposition~4.3]{ef2nt}\label{equivpunchad}
 Let $C$ be a $2$-regular code in $H(11,2)$ with $\delta\geq 5$ and $|C|\geq 2$. Then one of the following holds: 
 \begin{enumerate}
 \item $\delta=11$ and $C$ is equivalent to the binary repetition code,
 \item $\delta=5$ and $C$ is equivalent to the punctured Hadamard code $\P$, or 
 \item $\delta=6$ and $C$ is equivalent to the even weight subcode $\mathcal{E}$ of $\P$.
 \end{enumerate}
\end{proposition}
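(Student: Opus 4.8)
The plan is to use translation-invariance to assume $\b 0\in C$, combine Lemma~\ref{design} with $\delta\geq 5$ to cut the weight enumerator of $C$ down to a short list, and then use full $2$-regularity to determine the remaining parameters and recognise $C$. For the first step, note that every element of the base group $B\cong\s_2^m$ of $\Aut(H(m,2))$ acts on $H(m,2)$ as a translation, and that $2$-regularity (Definition~\ref{regcodedefinition}) is preserved by equivalence; so there is no loss in assuming $\b 0\in C$. Since $\delta\geq 5\geq 2\cdot 2$, Lemma~\ref{design} then shows that for each $k$ the weight-$k$ codewords of $C$ are the blocks of a binary $2$-$(11,k,\lambda_k)$ design; in particular $A_k=0$ for $1\leq k\leq 4$, where $A_k$ is the number of weight-$k$ codewords. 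For codewords $u,v$ of weights $w,w'$, the relation $w+w'-2|\supp(u)\cap\supp(v)|=d(u,v)\geq 5$ bounds pairwise block intersections; feeding this bound into the standard design identities — in particular $\sum_{v\neq u}|\supp(u)\cap\supp(v)|=k(r_k-1)$ for a fixed block $u$, where $r_k$ is the replication number, together with the integrality of $r_k$ and of the block number — rules out all weights $k\in\{7,8,9,10\}$ and forces $\lambda_5\in\{0,2\}$, $\lambda_6\in\{0,3\}$ and $A_{11}\in\{0,1\}$. Hence $\delta\in\{5,6,11\}$ and $A_5,A_6\in\{0,11\}$, where when nonzero these are the unique $2$-$(11,5,2)$ biplane $\B$ and the unique $2$-$(11,6,3)$ design $\D$ of Definition~\ref{hadamarddef}.

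The cases $\delta=11$ and $\delta=6$ are then short. If $\delta=11$ the only possible nonzero codeword is $(1^{11})$, so $C$ is the binary repetition code (conclusion~1). If $\delta=6$, all nonzero weights are at least $6$, so only $A_6$ and $A_{11}$ can be nonzero; $A_6=0$ would give $C\subseteq\{\b 0,(1^{11})\}$ with $\delta=11$, a contradiction, so $A_6=11$, and then $(1^{11})\in C$ would put a codeword at distance $5$ from a weight-$6$ codeword, again contradicting $\delta=6$, so $A_{11}=0$; thus $C=\{\b 0\}\cup\D$, which by uniqueness of $\D$ is equivalent to the even weight subcode $\E$ of $\P$ (conclusion~3). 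This leaves the main case $\delta=5$, where necessarily $A_5=11$, so the weight-$5$ codewords form a copy of $\B$. Here I would use that, since $\delta\geq 5$, the balls of radius $2$ about codewords are pairwise disjoint, so $C_1$ and $C_2$ are unions of spheres; in particular every weight-$2$ vertex and every weight-$3$ vertex whose support lies inside the support of a (necessarily unique) weight-$5$ codeword lies in $C_2$. For such vertices $\nu$ the numbers $|\varGamma_k(\nu)\cap C|$ are directly computable in terms of $\lambda_5$, $\lambda_6$ and $A_{11}$, and $2$-regularity forces them to be independent of $\nu$; comparing the two kinds of vertex at $k=4$ forces $\lambda_6=3$, hence $A_6=11$, and comparing at $k=8$ then forces $A_{11}=1$.

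Thus in the case $\delta=5$ we reach $C=\{\b 0,(1^{11})\}\cup\B\cup\D'$ for some $2$-$(11,6,3)$ design $\D'$. To identify $\D'$, I would fix a block $u\in\B$ and argue as follows. The $k=3$ comparison above shows each of the $\binom{5}{3}=10$ three-subsets of $\supp(u)$ lies in a weight-$6$ codeword, and no two distinct such subsets can lie in a common weight-$6$ codeword $v$, since that would force $|\supp(u)\cap\supp(v)|\geq 4>3$; hence $10$ of the $11$ blocks of $\D'$ meet $\supp(u)$ in exactly $3$ points, and then $\sum_{v\in\D'}|\supp(u)\cap\supp(v)|=5r_6=30$ forces the remaining block of $\D'$ to be the one supported on $M\setminus\supp(u)$. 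Running this over all $11$ blocks $u\in\B$ shows that $\D'$ consists exactly of the complements of the blocks of $\B$. Finally, by uniqueness of the biplane a coordinate permutation carries $\B$ to the biplane appearing in $\P$; since complementation commutes with coordinate permutations and $\b 0,(1^{11})$ are fixed, the same permutation carries $C$ onto $\P$ (conclusion~2).

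The design arithmetic used to restrict the weight enumerator is routine. The substantive part of the argument is the case $\delta=5$: one has to squeeze enough out of $2$-regularity — beyond the bare design property provided by Lemma~\ref{design} — to eliminate the candidate codes with $|C|\in\{12,13,23\}$ and then to recognise the surviving code as the punctured Hadamard $12$ code.
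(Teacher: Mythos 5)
This proposition is quoted from \cite[Proposition~4.3]{ef2nt}; the present paper gives no proof of it, so there is nothing internal to compare your argument against. Taken on its own terms, your reconstruction is sound: the design/intersection arithmetic correctly kills weights $7$--$10$ and pins $\lambda_5\in\{0,2\}$, $\lambda_6\in\{0,3\}$; the $2$-regularity comparisons between weight-$2$ vertices and weight-$3$ vertices inside a biplane block do give $\lambda_6=3$ at $k=4$ and $A_{11}=1$ at $k=8$ (I checked the counts: $\lambda_6$ versus $3$, and $2$ versus $1+A_{11}$); and the $k=3$ comparison plus $5r_6=30$ correctly forces $\D'$ to consist of the complements of the biplane blocks, whence $C\cong\P$ by uniqueness of the biplane. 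Two points need attention. First, Lemma~\ref{design} as stated in this paper has $(X,s)$-neighbour-transitivity as its hypothesis, not $s$-regularity, so you cannot cite it verbatim; you need the standard $s$-regular version (true, and provable by the usual telescoping argument comparing $|\varGamma_{k-1}(\nu)\cap C|$ for $\nu$ of weight $\leq s$, using that such $\nu$ lies in $C_{\wt(\nu)}$ when $\delta>2s$).

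Second, and more substantively, your assertion that $\delta=5$ forces $A_5=11$ is not justified and is not automatic: a code containing $\b 0$ whose nonzero weights are $6$ and $11$ still has minimum distance $5$, realised between $(1^{11})$ and a weight-$6$ codeword. So the case $A_5=0$, $C=\{\b 0,(1^{11})\}\cup\D$ survives your weight-enumerator analysis with $\delta=5$ and must be eliminated before you can run the weight-$3$-vertex comparisons (which require a weight-$5$ codeword to exist). The fix is one line: this code is not even $0$-regular, since $|\varGamma_5(\b 0)\cap C|=0$ while $|\varGamma_5((1^{11}))\cap C|=11$. With that case disposed of, $\delta=5$ does force $A_5=11$ and the rest of your argument (including the elimination of the codes of sizes $12$, $13$ and $23$ via the $k=4$ and $k=8$ comparisons, or directly via $0$-regularity) goes through.
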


\section{Extensions of the binary repetition code}\label{extsection}

In this section it will be shown that the hypotheses of Theorem~\ref{onedimensionaltheorem} imply that $W$ is the binary repetition code in $H(m,q)$. From there, all $(X,2)$-neighbour-transitive extensions of the binary repetition code are classified. First, a more general result regarding $(X,2)$-neighbour-transitive codes. Note that a \emph{system of imprimitivity} for the action of a group $G$ on a set $\varOmega$ is a non-trivial partition of $\varOmega$ preserved by $G$, and a part of the partition is called a \emph{block of imprimitivity}.

\begin{lemma}\label{blockofimpis2nt}
 Suppose $C$ is an $(X,2)$-neighbour transitive code with $\delta\geq 5$ and that $\Delta$ is a block of imprimitivity for the action of $X$ on $C$. Then $\Delta$ is an $(X_\Delta,2)$-neighbour transitive code with minimum distance $\delta_\Delta\geq 5$.
\end{lemma}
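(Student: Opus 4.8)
The plan is to verify the three defining properties in turn: that $\Delta$ has minimum distance at least $5$, that $X_\Delta$ acts transitively on $\Delta$, and that $X_\Delta$ acts transitively on each of $\Delta_1$ and $\Delta_2$ (the first two parts of the distance partition of $V\varGamma$ with respect to $\Delta$). Transitivity of $X_\Delta$ on $\Delta$ is immediate: $\Delta$ is a block of imprimitivity for $X$ acting (transitively) on $C$, so its setwise stabiliser $X_\Delta$ is transitive on $\Delta$ by the standard block--stabiliser correspondence. The minimum distance statement is also easy, since $\Delta\subseteq C$ gives $\delta_\Delta\geq\delta\geq 5$, so that, as noted after Table~\ref{hammingnotation}, each vertex at distance at most $2$ from $\Delta$ is at that distance from a \emph{unique} codeword of $\Delta$.

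The substance of the lemma is the transitivity of $X_\Delta$ on $\Delta_1$ and on $\Delta_2$. The key observation is that, because $\delta\geq 5$ and $\delta_\Delta\geq 5$, for $s\in\{1,2\}$ we have the disjoint decompositions $C_s=\bigsqcup_{\alpha\in C}\varGamma_s(\alpha)$ and $\Delta_s=\bigsqcup_{\alpha\in\Delta}\varGamma_s(\alpha)$, and moreover $\Delta_s\subseteq C_s$ (a vertex at distance $s\leq 2$ from some $\alpha\in\Delta\subseteq C$ lies in $C_s$, using $\delta\geq 5$). So fix $s\in\{1,2\}$ and take $\beta,\beta'\in\Delta_s$; say $\beta\in\varGamma_s(\alpha)$ and $\beta'\in\varGamma_s(\alpha')$ with $\alpha,\alpha'\in\Delta$ (these codewords being unique). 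Since $C$ is $(X,2)$-neighbour-transitive and $\beta,\beta'\in C_s$, there is $x\in X$ with $\beta^x=\beta'$. Then $\alpha^x$ is a codeword of $C$ at distance $s\leq 2$ from $\beta'$, and by uniqueness $\alpha^x=\alpha'$. Now $\alpha'\in\Delta$, so $\alpha^x\in\Delta$, whence $\Delta$ and $\Delta^x$ are blocks of the $X$-invariant partition of $C$ sharing the point $\alpha^x$; by the defining property of a system of imprimitivity this forces $\Delta^x=\Delta$, i.e. $x\in X_\Delta$. Thus $x$ witnesses $\beta^x=\beta'$ with $x\in X_\Delta$, proving $X_\Delta$ is transitive on $\Delta_s$.

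Combining these, $\Delta$ is $(X_\Delta,2)$-neighbour-transitive with $\delta_\Delta\geq 5$, as claimed. The only point requiring any care is the step $\alpha^x=\alpha'$: this is where uniqueness of the nearest codeword (hence the hypothesis $\delta\geq 5$, giving $C_2=\bigsqcup_{\alpha\in C}\varGamma_2(\alpha)$ a disjoint union) is essential, and it is exactly this that lets one pull the element $x$ realising an equivalence of neighbours back into the block stabiliser $X_\Delta$. One should also note in passing that the covering radius of $\Delta$ is at least $2$ — which holds since $\delta_\Delta\geq 5$ forces $\rho_\Delta\geq\lfloor(\delta_\Delta-1)/2\rfloor\geq 2$ — so that the sets $\Delta_1,\Delta_2$ are genuinely part of the distance partition of $\Delta$ and the statement is not vacuous.
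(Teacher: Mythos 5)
Your proof is correct and follows essentially the same route as the paper's: reduce to transitivity on $\Delta_s$ for $s\in\{1,2\}$, use uniqueness of the nearest codeword (from $\delta\geq 5$) to deduce $\alpha^x=\alpha'$, and then invoke the block property $\Delta\cap\Delta^x\neq\emptyset\Rightarrow\Delta^x=\Delta$ to land $x$ in $X_\Delta$. The additional observations you make (that $\Delta_s\subseteq C_s$ and that $\rho_\Delta\geq 2$) are correct and only make the argument more explicit.
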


\begin{proof}
 Since $\Delta$ is a block of imprimitivity for the action of $X$ on $C$, it follows that $X_\Delta$ is transitive on $\Delta$. Since $\delta\geq 5$ and $\Delta\subseteq C$ it follows that $\delta_\Delta\geq 5$. Since $X_\Delta$ fixes $\Delta$, we have that $X_\Delta$ fixes $\Delta_1$ and $\Delta_2$. It remains to show that $X_\Delta$ is transitive on $\Delta_i$ for $i=1,2$. Let $i\in\{1,2\}$ and $\mu,\nu\in\Delta_i$. Then, since $\delta_\Delta\geq 5$, there exists $\alpha,\beta\in \Delta$ such that $\mu\in\varGamma_i(\alpha)$ and $\nu\in\varGamma_i(\beta)$. Moreover, $\mu,\nu\in C_i$ since $\delta\geq 5$. Hence, there exists $x\in X$ such that $\mu^x=\nu$ and, since $\delta\geq 5$, $\alpha^x=\beta$ and so lies in $\Delta\cap\Delta^x$. Since $\Delta$ is a block of imprimitivity, it follows that $x$ fixes $\Delta$ setwise, so that $x\in X_\Delta$. Thus $X_\Delta$ is transitive on $\Delta_i$ for $i=\{1,2\}$.
\end{proof}

\begin{corollary}\label{Wis2nt}
 Let $C$ be an $(X,2)$-neighbour-transitive extension of $W$ such that $C$ has minimum distance $\delta\geq 5$. Then $W$ is a block of imprimitivity for the action of $X$ on $C$ and $W$ is $(X_W,2)$-neighbour-transitive with minimum distance $\delta_W\geq 5$.
\end{corollary}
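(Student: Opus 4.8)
The plan is to deduce Corollary~\ref{Wis2nt} almost immediately from Lemma~\ref{blockofimpis2nt}, so the work reduces to verifying that $W$ is a block of imprimitivity for the action of $X$ on $C$. Recall from Definition~\ref{xntextensiondef} that $T_W\leq X$, that $K=X\cap B=K_W$, and that $\b 0\in C$ forces $W\subseteq C$. The key observation is that the translation subgroup $T_W$ acts regularly on $W$ (so $X$ is transitive on $W$ as a subset of $C$), and that $W$, being a coset of itself under $T_W$, behaves like an ``orbit'' of a normal-ish subgroup. To make this precise I would argue directly that for any $x\in X$, either $W^x=W$ or $W^x\cap W=\emptyset$, which is exactly the block condition.

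First I would show $W$ is a union of $T_W$-orbits on $C$; since $T_W$ acts on $C$ and $W\subseteq C$ with $T_W$ transitive on $W$, in fact $W$ is a single $T_W$-orbit. Next, take $x\in X$ and suppose $W^x\cap W\neq\emptyset$, say $\b v^x=\b w$ with $\b v,\b w\in W$. The group $X$ normalises $K=X\cap B$ (as $B\nsub \Aut(\varGamma)$), and $T_W\leq K$; but $T_W$ need not be normal in $K$ on its own. Here is where the hypothesis $K=K_W$ does the work: $K$ stabilises $W$ setwise, hence $K$ permutes the cosets of $T_W$ inside $\langle K, T_W\rangle$ suitably, and more to the point $W$ is $K$-invariant. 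Then for $x\in X$, conjugation sends $T_W$ into $K^x=K$ — wait, $T_W^x\leq X\cap B^x = X\cap B = K$ since $B$ is normal; and $T_W^x$ is again a translation group $T_{W^x}$ (conjugating translations by any automorphism of $H(m,q)$ yields translations by the image subspace). So $T_{W^x}\leq K=K_W$, which forces $T_{W^x}$ to stabilise $W$, i.e. $W+W^x\subseteq W$... more carefully, $T_{W^x}\leq K_W$ means every translation by an element of $W^x$ fixes $W$ setwise, hence $W^x\subseteq W$ (translating $\b 0\in W$... but $\b 0$ may not lie in $W^x$; instead use that $W^x$ is itself a subspace and translating $\b v\in W\cap W^x$ by $W^x$ stays in $W$, giving $\b v + W^x\subseteq W$, so $W^x\subseteq W$, and by symmetry $W\subseteq W^x$). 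Thus $W^x=W$, establishing that $W$ is a block of imprimitivity.

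With $W$ a block of imprimitivity for $X$ on $C$, Lemma~\ref{blockofimpis2nt} applies with $\Delta=W$ and yields that $W$ is $(X_W,2)$-neighbour-transitive with minimum distance $\delta_W\geq 5$, which is the assertion of the corollary. I expect the main obstacle to be the middle step: carefully justifying that $T_{W}^x$ is the translation group $T_{W^x}$ and that the condition $K=K_W$ — rather than merely $T_W\nsub K$ — is precisely what forces $W^x\cap W\neq\emptyset$ to collapse to $W^x=W$. One must be slightly careful that $W$ may not contain a convenient base point shared with $W^x$ other than the assumed common element, so the argument should be phrased via $\b v+W^x\subseteq W$ for $\b v\in W\cap W^x$ and symmetry, rather than via $\b 0$. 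Everything else is bookkeeping with the definitions of $T_W$, $K$, and block of imprimitivity.
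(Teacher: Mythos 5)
Your overall strategy is exactly the paper's: reduce everything to showing that $W$ is a block of imprimitivity for $X$ on $C$, and then apply Lemma~\ref{blockofimpis2nt} with $\Delta=W$. The paper handles the block step more compactly by noting that $K=X\cap B$ is normal in $X$ (since $B\nsub\Aut(\varGamma)$), that $T_W\leq K=K_W$ is transitive on $W$ while $K_W$ stabilises $W$ setwise, so $W$ is a $K$-orbit on $C$, and orbits of a normal subgroup of a transitive group are blocks (citing \cite[Theorem~1.6A~(i)]{dixon1996permutation}). Your direct verification of the condition ``$W^x\cap W\neq\emptyset$ implies $W^x=W$'' is essentially that standard theorem unpacked, so the two routes are the same in substance.

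There is, however, one assertion in your verification that is false as stated: conjugating a translation by an arbitrary $x=h\sigma\in\Aut(\varGamma)$ does \emph{not} in general yield a translation. In entry $i$ the conjugate acts as $a\mapsto\bigl(a^{h_i^{-1}}+w_{i\sigma^{-1}}\bigr)^{h_i}$, which is a translation of $Q_i$ only for special $h_i$; nothing in the hypotheses forces $X$ to normalise the full translation group, so $T_W^x$ need not equal $T_{W^x}$, and the step ``$\b v+W^x\subseteq W$ because translations by elements of $W^x$ stabilise $W$'' does not follow as written. Fortunately the claim is not needed. You correctly observe that $T_W^x\leq X\cap B^x=X\cap B=K=K_W$, so the conjugate group $T_W^x$ stabilises $W$ setwise; moreover $T_W^x$ acts transitively on $W^x$, being the image under $x$ of the transitive action of $T_W$ on $W$. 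Hence for $\b v\in W\cap W^x$ the orbit $\b v^{T_W^x}$ equals $W^x$ and is contained in $W$, giving $W^x\subseteq W$ and then $W^x=W$ by cardinality (or by applying the same argument to $x^{-1}$). With that repair your argument is complete and agrees with the paper's.
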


\begin{proof}
 Now, $K=K_W$ is normal in $X$ and $T_W\leq K_W$ is transitive on $W$ from which it follows that $W$ is an orbit of $K$ on $C$ and hence, by \cite[Theorem~1.6A (i)]{dixon1996permutation}, is a block of imprimitivity for the action of $X$ on $C$. Thus, the result is implied by Lemma~\ref{blockofimpis2nt}.
\end{proof}

The next result shows that the binary repetition code is the only $2$-neighbour-transitive code which is a $k$-dimensional $\F_p$-subspace of $V=\F_p^{dm}$, identified with the vertex set of $H(m,p^d)$, such that $1\leq k\leq d$. 

\begin{lemma}\label{onedimbinaryrep}
 Let $q=p^d$ and $V=\F_p^{dm}$ be the vertex set of the Hamming graph $H(m,q)$ and let $W$ be a $k$-dimensional $\F_p$-subspace of $V$, with $1\leq k\leq d$, such that $W$ is an $(X,2)$-neighbour-transitive code with minimum distance $\delta\geq 5$. Then $q=2$ and $W$ is the binary repetition code in $H(m,2)$.
\end{lemma}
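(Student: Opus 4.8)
The plan is to exploit the tension between the divisibility constraint coming from Lemma~\ref{xtpitrans}(3) and the fact that $W$, being a $k$-dimensional $\F_p$-space with $k\le d$, is a very small code. First I would record the basic parameters: since $W\le V=\F_p^{dm}$ has $\F_p$-dimension $k$, we have $|W|=p^k\le p^d=q$, and $W$ contains $\b 0$. By Lemma~\ref{xtpitrans}(3) applied to the $(X,2)$-neighbour-transitive code $W$, the integer $|X_{\b 0}|$ is divisible by $\binom{m}{2}(q-1)^2$; in particular $\binom{m}{2}(q-1)^2$ divides $|X|$. On the other hand $X$ acts on $W$, which has only $p^k$ elements, so $|X^W|$ divides $|\Sym(W)|=(p^k)!$, and $|X|=|K|\cdot|X^W|$ where $K$ is the kernel of the action of $X$ on entries. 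The point is that $K$ itself is severely restricted: $W$ is $X$-invariant, so $K\le B$ stabilises the code $W$ entrywise-coordinatised, and a nonidentity element of $K$ fixing $\b 0$ would have to permute the nonzero entries of $Q_i$ while preserving $W$; I would argue that the constraint $k\le d$ forces $W$ to be ``thin'' enough that this cannot contribute the large factor $(q-1)^2$ unless $q$ is small.

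The cleanest route, I expect, is the Singleton bound. Since $W$ is a linear code over $\F_q$ only when $k$ is a multiple of $d$, I should instead work over $\F_p$: regard $H(m,q)=H(m,p^d)$ and note that $W$ viewed in the ``blown-up'' Hamming graph $H(dm,p)$ is an $\F_p$-linear code of dimension $k$, but its minimum distance there could differ. It is cleaner to keep the $\F_p$-structure and argue directly: each nonzero $w\in W$ has Hamming weight (in $H(m,q)$) at least $\delta\ge 5$, so $|\supp(w)|\ge 5$. Now consider the projection/evaluation maps. For a fixed entry $i\in M$, the set $\{w_i : w\in W\}$ is an $\F_p$-subspace of $Q_i\cong\F_q=\F_p^d$; call its $\F_p$-dimension $k_i$. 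Then $\sum_i$ (something) relates the $k_i$ to $k$, and crucially each nonzero coordinate value lies in an $\F_p$-subspace of $Q_i$. The key observation is that $X_{\b 0,i,j}$ acts transitively on $Q_i^\times$ by Lemma~\ref{xtpitrans}(1), but it must preserve the subspace $\{w_i:w\in W\}$; hence either this subspace is all of $Q_i$ (so $k_i=d$) or it is $\{0\}$. If it is $\{0\}$ for some $i$ then that entry is useless and $W$ is really a code of length $<m$; so after discarding such entries we may assume $k_i=d$ for all $i$, i.e. the coordinate map $W\to Q_i$ is surjective, forcing $p^k=|W|\ge q=p^d$, hence $k\ge d$, hence $k=d$ and $|W|=q$.

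So $W$ is an $\F_p$-subspace of $V$ of size $q=p^d$ with every coordinate map $W\to Q_i$ surjective, i.e.\ $W$ is (the image of) an $\F_p$-linear injective map $\F_q\to V$ with all coordinate projections surjective — equivalently $W$ is a ``linear MDS-like'' code meeting $|W|=q$. Minimum distance $\delta\ge 5$ together with the Singleton bound $|W|\le q^{m-\delta+1}$ gives $q\le q^{m-\delta+1}$, so $m\ge \delta\ge 5$; that alone does not finish it. The real finish: the transitive action of $X_{\b 0,i,j}$ on $Q_i^\times$ is $\F_p$-semilinear on $W\cong\F_q$ (since it preserves the $\F_p$-space structure and $W\to Q_i$ is an isomorphism), and likewise on $Q_j^\times$, and by Lemma~\ref{xtpitrans}(1) it is simultaneously transitive on both; examining the weight-$2$ codewords of $W$ with support $\{i,j\}$ — there are $q-1$ of them if they exist, but $\delta\ge 5$ forbids weight-$2$ codewords, so there are none — I get a strong constraint linking $w_i$ and $w_j$ for each $w\in W$, namely $w\mapsto w_j$ is obtained from $w\mapsto w_i$ by an invertible $\F_p$-semilinear map $\phi_{ij}:Q_i\to Q_j$, and these are $X_{\b 0}$-equivariant across the $2$-transitive action on $M$. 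Putting $Q_i\equiv Q_j\equiv\F_q$ via these maps, $W$ becomes $\{(c,c,\dots,c):c\in\F_q\}$, the $q$-ary repetition code, whose minimum distance is $m$; being $2$-neighbour-transitive with $X_{\b 0}^{Q_i}$ $2$-transitive and kernel $K\ne 1$ acting diagonally, the standard bound (or Lemma~\ref{xtpitrans}(4): if $|X_{\b 0}|=\binom{m}{2}$ then $q=2$) combined with the fact that for the repetition code $X_{\b 0}$ induces at most $\Sym(M)\times(\text{scalars})$, forces the scalar part to be trivial, i.e. $q-1=1$, so $q=2$ and $W$ is the binary repetition code.

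The step I expect to be the main obstacle is ruling out, cleanly and without a long case analysis, the possibility that $W$ is a $q$-ary repetition code with $q>2$: one must show that no $2$-neighbour-transitive structure survives there, which ultimately comes down to the fact that the translations $T_W$ together with $K=K_W=K$ cannot be extended by enough extra automorphisms (the only candidates beyond $\Sym(M)$ act as field automorphisms/scalars on $\F_q$, and scalars do not preserve a code containing $\b 0$ and the all-$c$ vectors while fixing $\b 0$ unless they are trivial on $W^\times$, i.e.\ $q=2$). I would handle this by invoking Lemma~\ref{xtpitrans}(1),(2): transitivity of $X_{\b 0,i,j}$ on $Q_i^\times$ forces a transitive group of $\F_p$-semilinear maps on $\F_q$ fixing $0$, which is fine, but combined with $2$-transitivity of $X_{\b 0}$ on $M$ and the diagonal identification it would make $W$ admit all of $\GaL_1(q)$ diagonally — contradicting $\delta_W\ge 5$ only via a counting/consistency argument unless $q=2$. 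Making this last contradiction crisp (rather than hand-wavy) is where the actual work lies; everything before it is bookkeeping with the Singleton bound and the orbit structure from Lemma~\ref{xtpitrans}.
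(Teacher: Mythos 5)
Your first half is sound and arrives at essentially the same waypoint as the paper. The observation that $\pi_i(W)=\{w_i\mid w\in W\}$ is an $X_{{\b 0},i}$-invariant subset of $Q_i$ containing $0$, hence by Lemma~\ref{xtpitrans}(1) equal to $\{0\}$ or to $Q_i$, is exactly the right use of the transitivity on $Q_i^\times$; combined with $|W|=p^k\leq p^d$ it forces $k=d$ and every projection to be a bijection, whence every non-zero codeword has full support and $\delta=m$. (One small repair: you cannot ``discard'' the entries with $\pi_i(W)=\{0\}$ — instead use the transitivity of $X_{\b 0}$ on $M$ from Proposition~\ref{ihom} to see that all the $\pi_i(W)$ are simultaneously $\{0\}$ or simultaneously $Q_i$, and the former contradicts $k\geq 1$.) The paper reaches the same conclusion $\delta=m$ by a contradiction argument with a weight-$\delta$ codeword, and then finishes in one line by citing \cite[Lemma~2.15]{ef2nt}, which says a $2$-regular code of minimum distance $m$ with at least two codewords is the binary repetition code.

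The genuine gap is your final step, ruling out $q>2$ for the $q$-ary repetition code, and the sketch you give for it would not survive being written out. The central unjustified claim is that the induced action of $X_{{\b 0},i,j}$ on $W\cong\F_q$ is $\F_p$-semilinear: an automorphism of the Hamming graph stabilising $W$ preserves $W$ only as a \emph{set}, not as an $\F_p$-space, so nothing forces the induced permutations of $Q_i$ or of $W$ to be linear or semilinear, and the subsequent ``only scalars and field automorphisms are available'' reasoning has no foundation (indeed scalars \emph{do} preserve the $q$-ary repetition code, so that line cannot produce a contradiction). A clean finish is available from material you already have in hand: since the projections are bijections, $D=\{(\omega_i,\omega_j)\mid \omega\in W^\times\}$ is an $X_{{\b 0},i,j}$-invariant subset of $Q_i^\times\times Q_j^\times$ of size $q-1$, while by Lemma~\ref{xtpitrans}(1)--(2) every $X_{{\b 0},i,j}$-orbit on $Q_i^\times\times Q_j^\times$ has size at least $(q-1)^2/2$; hence $q-1\geq (q-1)^2/2$, i.e.\ $q\leq 3$. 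The case $q=3$ is then killed by $2$-regularity (Lemma~\ref{design}): the vertices $(1,1,0,\ldots,0)$ and $(1,2,0,\ldots,0)$ both lie in $C_2$, but the first has a codeword at distance $m-2$ and the second does not. Without some such argument the lemma is not proved, since the whole content of the conclusion ``$q=2$'' lives in this step.
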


\begin{proof}
 We claim that $\delta=m$. As any $(X,2)$-neighbour transitive code is also $2$-regular, by Lemma~\ref{design}, and ${\b 0}\in W$, proving the claim implies the result, by \cite[Lemma~2.15]{ef2nt}. Suppose for a contradiction that $\delta<m$. It follows that there exists a weight $\delta$ codeword $\alpha\in W$ and distinct $i,j\in M$ such that $\alpha_i=0$ and $\alpha_j\neq 0$. Now, $X_{{\b 0},i,j}$ acts transitively on $Q_j^\times$, by Lemma~\ref{xtpitrans}, so that for all non-zero $a\in\F_p^d$ there exists some $x_a\in X_{{\b 0},i,j}$ such that $\alpha^{x_a}\in W$ with $(\alpha^{x_a})_j=a$. As $a$ ranges over all non-zero $a\in\F_p^d$ this gives $p^d-1$ distinct codewords. Since $|W|=p^k\leq p^d$, and ${\b 0}\in W$, it follows that $|W|=p^d$ and $k=d$. Note that since $\alpha_i=0$ and $x_a\in X_{{\b 0},i,j}$ this implies that every element of $W$ has $i$-th entry $0$. By Proposition~\ref{ihom}, $X_{\b 0}$ is, in particular, transitive on $M$. Hence, there exists some $y=h\sigma\in X_{\b 0}$, with $h\in B$ and $\sigma\in L$, such that $j^\sigma=i$. Thus $\alpha^y\in W$ with $(\alpha^y)_i\neq 0$. This gives a contradiction, proving the claim that $\delta=m$.
\end{proof}

Lemma~\ref{onedimbinaryrep} implies part 1 of Theorem~\ref{onedimensionaltheorem} and also that, given the hypotheses of Theorem~\ref{onedimensionaltheorem}, it can be assumed that $q=2$ and $W$ is the repetition code in $H(m,2)$.

\begin{lemma}\label{kernelistranslations}
 Let $C$ be an $(X,2)$-neighbour-transitive extension of $W$, where $W$ is the repetition code in $H(m,2)$, with $\delta\geq 5$. Then $X_{\b 0}\cong X_{\b 0}^M=X_W^M$, $K=T_W$ and $X_W=T_W\rtimes X_{\b 0}$.
\end{lemma}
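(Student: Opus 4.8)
The plan is to analyze the structure of $X_W$ as an extension of the point-stabiliser $X_{\b 0}$ by the kernel $K$, using the fact that $W$ is the binary repetition code, together with the defining condition $K = K_W$ from the definition of an $(X,2)$-neighbour-transitive extension. First I would establish that $K = T_W$. By hypothesis $T_W \leq K$, and $T_W$ is the group of translations by the two vectors $\{\b 0, (1^m)\}$ of $W$, so $|T_W| = 2$. Since $W$ is a block of imprimitivity for $X$ on $C$ (Corollary~\ref{Wis2nt}) and $K = K_W$ stabilises $W$ setwise, $K$ acts on the two-element set $W = \{\b 0, (1^m)\}$. The subgroup of $K$ fixing $\b 0$ is $K \cap X_{\b 0} = X_{\b 0} \cap B$; but any element of $B$ fixing $\b 0$ fixes every entry value $0$, and since $X_{\b 0}$ acts $2$-transitively (hence faithfully enough) on each $Q_i = \{0,1\}$... more carefully: an element $h \in B$ fixing $\b 0$ has each $h_i \in \Sym(Q_i) = \Sym(\{0,1\})$ fixing $0$, hence $h_i = 1$ for all $i$, so $h = 1$. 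Thus $K \cap X_{\b 0} = 1$, and since $T_W \leq K$ already swaps the two codewords of $W$, we get $|K| = |K : K\cap X_{\b 0}| \leq |W| = 2 = |T_W|$, forcing $K = T_W$.

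Next I would deduce $X_{\b 0} \cong X_{\b 0}^M$. The kernel of the action of $X_{\b 0}$ on $M$ is $X_{\b 0} \cap B = X_{\b 0} \cap K = 1$ by the previous paragraph, so $X_{\b 0}$ acts faithfully on $M$, giving $X_{\b 0} \cong X_{\b 0}^M$. For the identity $X_{\b 0}^M = X_W^M$: clearly $X_{\b 0}^M \leq X_W^M$ since $X_{\b 0} \leq X_W$. Conversely, given $x \in X_W$, it maps $\b 0$ to a codeword of $W$, i.e. to $\b 0$ or $(1^m)$; composing with the translation $t \in T_W \leq X_W$ (which lies in $B$, hence acts trivially on $M$) we may assume $x$ fixes $\b 0$, so $x^M \in X_{\b 0}^M$. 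Hence $X_W^M = X_{\b 0}^M$.

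Finally, for $X_W = T_W \rtimes X_{\b 0}$: we have $T_W = K \trianglelefteq X_W$, and $T_W \cap X_{\b 0} = K \cap X_{\b 0} = 1$. For the product being all of $X_W$: the argument in the previous paragraph shows every $x \in X_W$ can be written $x = t x'$ with $t \in T_W$ and $x' \in X_{\b 0}$ (choosing $t$ to correct the image of $\b 0$; this works because $T_W$ is transitive on $W$ by definition of the extension). Thus $X_W = T_W X_{\b 0}$ with $T_W$ normal and trivial intersection, giving the semidirect product decomposition.

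I expect the only point requiring genuine care is the computation $K \cap X_{\b 0} = 1$, i.e. that an automorphism in the base group fixing $\b 0$ is trivial — this uses $q = 2$ crucially, since $\Sym(\{0,1\})$ is regular on $\{0,1\}$, so fixing $0$ forces triviality in each entry. Everything else is formal manipulation of block stabilisers and the semidirect product criterion once $|T_W| = 2$ and the transitivity of $T_W$ on $W$ are in hand.
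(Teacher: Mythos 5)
Your proof is correct and follows essentially the same route as the paper: the crux in both is that an element of the base group fixing $\b 0$ must be trivial in every entry because $q=2$, whence $X_{\b 0}\cap B=1$, and the rest follows from $W$ being a block of imprimitivity with $T_W$ acting transitively (and regularly) on it. You derive $K=T_W$ directly via orbit--stabiliser on the two-element set $W$ before assembling the semidirect product, while the paper obtains $X_W=T_W\rtimes X_{\b 0}$ first and reads off $K=T_W$ from it, but this is only a reordering of the same deductions.
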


\begin{proof}
 Let $W$ be the repetition code in $H(m,2)$. If $x=h\sigma\in X_{\b 0}$, with $h\in B$ and $\sigma\in L$, then $q=2$ implies $h_i=1$ for all $i\in M$. Thus $X_{\b 0}\cong X_{\b 0}^M$. By Corollary~\ref{Wis2nt}, $W$ is a block of imprimitivity for the action of $X$ on $C$, from which it follows that $X_W=T_W\rtimes X_{\b 0}$, since $T_W$ acts transitively on $W$. Thus, $X_{\b 0}\cong X_{\b 0}^M= X_W^M$ and $K=T_W$.
\end{proof}
%
%
%

\begin{lemma}\label{2transitive}
 Suppose $C$ is a non-trivial $(X,2)$-neighbour transitive extension of the repetition code $W$ in $H(m,2)$, where $C$ has minimum distance $\delta\geq 5$. Then $\delta\neq m$, $X^M$ acts $2$-transitively on $M$ and $X_W^M$ acts $2$-homogeneously on $M$. Moreover, if $X_W^M$ acts $2$-transitively on $M$ then $X_{i,j}^M$ has a normal subgroup of index $2$, where $i,j\in M$ and $i\neq j$.
\end{lemma}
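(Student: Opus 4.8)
The plan is to extract structure from the assumption that $C$ is a *non-trivial* extension, i.e.\ $C \neq W$, together with the fact (Lemma~\ref{kernelistranslations}) that $X_W = T_W \rtimes X_{\b 0}$ and $X_{\b 0} \cong X_{\b 0}^M = X_W^M$. First I would show $\delta \neq m$: if $\delta = m$ then every nonzero codeword has full support, so $W$ and any codeword $\alpha \in C \setminus W$ would force a weight-$m$ vertex $\alpha + \b 1$ into $C$ as well (since $\b 1 \in W \subseteq C$ and the minimum distance would be violated unless supports interact correctly) — more carefully, I would use Lemma~\ref{design}: the weight-$k$ codewords form a binary $2$-$(m,k,\lambda)$ design for each $k$, so if $\delta = m$ the only possible nonzero weights are $m$ (from the two words of $W$) and there is no room for a codeword of $C \setminus W$ of weight $< m$; but a non-trivial extension needs such a codeword, giving a contradiction. (If a codeword of $C\setminus W$ had weight $m$, then adding $\b 1\in W$ produces the zero word, forcing it into $W$.) Hence $\delta < m$.

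Next, for the $2$-transitivity of $X^M$: since $\delta < m$ there is a codeword $\alpha \in C$ of weight $\delta$, hence distinct entries $i,j \in M$ with $\alpha_i = 0$, $\alpha_j \neq 0$, and also (as $\delta \geq 5 > 0$) some entry where $\alpha$ is nonzero and some where it is zero. The key input is Lemma~\ref{xtpitrans}: applied to $C$ (which contains $\b 0$ and has $\delta \geq 5$), it tells us that either $X_{\b 0}$ acts $2$-transitively on $M$, or $X_{{\b 0},i,j}$ has a single orbit on $Q_i^\times \times Q_j^\times$ for all pairs. Here $q = 2$, so $Q_i^\times \times Q_j^\times$ is a single point and part 2 gives no contradiction directly — so I would instead argue as in Lemma~\ref{onedimbinaryrep}: were $X_W^M$ only $2$-homogeneous but not $2$-transitive, the $X_{\b 0}$-orbit structure on ordered pairs would be constrained, and combined with the existence of a weight-$\delta$ codeword that is asymmetric in entries $i,j$ one derives that $X_{\b 0}$ must move the pair $(i,j)$ to $(j,i)$, forcing $2$-transitivity of $X_{\b 0}^M = X_W^M$ on $M$. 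For the full group $X^M$: since $C \neq W$ and $W$ is a block of imprimitivity with $X_W$ the full stabiliser, $X$ acts nontrivially on the block system $C/W$, so $X^M$ is a proper overgroup of $X_W^M$ that is still transitive (Proposition~\ref{ihom}) and $2$-homogeneous; standard permutation-group facts (an overgroup of a $2$-homogeneous group is $2$-homogeneous, and one then upgrades to $2$-transitive using that $X^M$ properly contains the $2$-homogeneous $X_W^M$, or directly via Proposition~\ref{x12trans}-type reasoning on $C$) give that $X^M$ is $2$-transitive on $M$.

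Finally, the "moreover" clause: suppose $X_W^M$ is genuinely $2$-transitive on $M$. Fix distinct $i,j \in M$. I would look at the action of $X_{W,\{i,j\}}^M$ on the block system $C/W$ restricted to the blocks "near" entries $i,j$ — more precisely, use that $X$ is $2$-transitive on $M$ while $X_W$ has index $|C/W| = |C|/|W| \geq 2$ in $X$, and analyse the $2$-point stabiliser. The cleanest route: $X_{i,j}^M$ (the setwise stabiliser of $\{i,j\}$ in $X^M$) contains $X_{W,i,j}^M$ as a subgroup, and the index equals the number of $X_{i,j}$-translates of $W$, which I expect to pin down to $2$ using the design/regularity count on weight-$\delta$ codewords supported compatibly with $\{i,j\}$ — i.e.\ there are exactly two blocks of $C/W$ that can be "swapped" by an element fixing $\{i,j\}$, namely $W$ itself and the block containing a chosen weight-$\delta$ codeword, so $X_{i,j}^M / (X_{i,j}^M)_W \cong \Z_2$, giving the normal subgroup of index $2$.

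The main obstacle I anticipate is the last paragraph: justifying precisely *why* the index of $X_{W,i,j}^M$ in $X_{i,j}^M$ is exactly $2$ (and normal), rather than some larger number. This requires carefully combining the block-of-imprimitivity structure of $W$ in $C$, the $2$-regularity of $C$ (Lemma~\ref{design}) to control how codewords of $C \setminus W$ sit relative to a fixed pair of entries, and the $2$-transitivity just established — essentially showing the quotient action of $X$ on $C/W$ localized at a pair of entries is as small as possible. The other steps ($\delta \neq m$ and the $2$-homogeneity upgrades) I expect to be routine given Lemmas~\ref{xtpitrans} and~\ref{design} and Proposition~\ref{ihom}.
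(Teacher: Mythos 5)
Your first step ($\delta\neq m$) is fine, but both of the harder steps have genuine gaps. For the $2$-transitivity of $X^M$: along the way you assert that $X_W^M=X_{\b 0}^M$ must itself be $2$-transitive, with no actual argument (``one derives that $X_{\b 0}$ must move the pair $(i,j)$ to $(j,i)$'' is not a derivation), and this claim is not something you should expect to prove --- the lemma deliberately concludes only $2$-homogeneity for $X_W^M$, and the candidate list in Table~\ref{2tr2hom} retains groups such as $\Z_{11}\rtimes\Z_5$ that are $2$-homogeneous but not $2$-transitive. Your fallback, that a proper transitive overgroup of a $2$-homogeneous group is $2$-transitive, is not a standard fact and is false in general (inside $\AGaL_1(27)$, for instance, $\Z_3^3\rtimes\Z_{13}<\Z_3^3\rtimes(\Z_{13}\rtimes\Z_3)$ is a proper chain of $2$-homogeneous, non-$2$-transitive groups); and Proposition~\ref{x12trans} concerns the alphabet, not $M$. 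The missing idea is the projection onto $J=\{i,j\}$ where $\alpha\in C\setminus W$ is asymmetric: $\pi_J(C)$ is all of $H(2,2)$, and transitivity of $X_{\{i,j\}}$ on $C$ (from \cite[Corollary~2.6]{ef2nt}) forces $4$ to divide $|X_{\{i,j\}}|$, hence $|X^M|=|X/K|$ is even; a $2$-homogeneous group of even order is $2$-transitive, which is the correct upgrade.

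For the ``moreover'' clause your proposed route fails on a count: since $X_{\{i,j\}}$ is transitive on $C$, it is transitive on the $|C|/2$ blocks of $C/W$, so the index of the $W$-stabiliser in $X_{i,j}^M$ is of order $|C|/2$ (e.g.\ $12$ for the Hadamard code), not $2$; the subgroup you exhibit is therefore not the right one, and you acknowledge you cannot close this step. The paper instead works entirely in the projection $P=\pi_{\{i,j\}}(C)$: $2$-transitivity of $X_W^M$ supplies an element of $X_W$ swapping $i$ and $j$, whence $|X_{W,\{i,j\}}^P|\geq 4$ and transitivity on $P$ forces $X_{\{i,j\}}^P=\Aut(H(2,2))$ of order $8$, so $|X_{i,j}^P|=4$; taking $H$ to be the kernel of $X_{i,j}$ acting on $P$, one checks $H\cap K=1$ and $|K^P|=2$, and an isomorphism-theorem computation identifies $X_{i,j}^P/K^P$ (of order $2$) with $X_{i,j}^M/H^M$, so $H^M$ is the required index-$2$ normal subgroup. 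That localisation to a $2$-coordinate projection is the idea your sketch is missing in both places.
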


\begin{proof}
 First, note that $\omega\in W$ if and only if $\omega_i=\omega_j$ for all $i,j\in M$. Since $C\neq W$ there exists a codeword $\alpha\in C\setminus W$ and distinct $i,j\in M$ such that $\alpha_i=0$ and $\alpha_j=1$, since otherwise $\alpha\in W$. Note that this implies that $\delta\neq m$. Let $J=\{i,j\}\subseteq M$ and consider the projection code $P=\pi_J(C)$. Now, $\pi_J(W)=\{(0,0),(1,1)\}\subseteq P$ and $\pi_J(\alpha)=(0,1)\in P$. Also, $\beta=\alpha+(1,\ldots,1)\in C$, since $T_W\leq X$, which implies $\pi_J(\beta)=(1,0)\in P$. Thus, $P$ is the complete code in the Hamming graph $H(2,2)$. By \cite[Corollary~2.6]{ef2nt}, $X_{\{i,j\}}$ acts transitively on $C$, from which it follows that $X_{\{i,j\}}^P$ acts transitively on $P$. Thus $|P|=4$ divides $|X_{\{i,j\}}^P|$ and hence also divides $|X|$. By Lemma~\ref{kernelistranslations}, $K=T_W$ so that $|K|=2$. Thus $2$ divides $|X/K|$. Proposition~\ref{ihom} and \cite[Exercise~2.1.11]{dixon1996permutation} then imply that $X/K=X^M$ is $2$-transitive. 
 
 By Corollary~\ref{Wis2nt}, $W$ is $(X_W,2)$-neighbour-transitive. Thus, by Proposition~\ref{ihom}, $X_W^M$ is $2$-homogeneous on $M$. Suppose $X_W^M$ is $2$-transitive on $M$. Since $X_{W,\{i,j\}}^P$ contains $K$ and interchanges $i$ and $j$, $|X_{W,\{i,j\}}^P|$ is divisible by $4$. Now, $|X_{\{i,j\}}^P|\leq 8$, since $\Aut(H(2,2))=(\s_2\times \s_2)\rtimes \s_2$. Furthermore, $|X_{\{i,j\}}^P:X_{W,\{i,j\}}^P|=2$, since $X_{\{i,j\}}^P$ acts transitively on $P$. Thus $X_{\{i,j\}}^P=(\s_2\times \s_2)\rtimes \s_2$, and so $|X_{i,j}^P|=4$. Let $H$ be the kernel of the action of $X_{i,j}$ on $P$. Since the only non-identity element of $K=T_W$ acts non-trivially on $P$, we deduce that $|K^P|=2$ and $H\cap K=1$. Hence, 
 \[
  \frac{X_{i,j}^P}{K^P} \cong \frac{X_{i,j}/H}{HK/H} \cong \frac{X_{i,j}}{HK} \cong \frac{X_{i,j}/K}{HK/K} \cong \frac{X_{i,j}^M}{H^M}.
 \]
 Therefore, $X_{i,j}^M$ has a quotient of size $2$, since $|X_{i,j}^P/K^P|=2$, and thus $H^M$ is a normal subgroup of $X_{i,j}^M$ of index $2$.
\end{proof}

The socle of a finite group is the product of all its minimal normal subgroups. If $C$ is an $(X,2)$-neighbour-transitive extension of the binary repetition code $W$ in $H(m,2)$ then the next two results show that the socles of $X^M$ and $X_W^M$ cannot be equal and that the socle of $X^M$ cannot be $\alt_m$.

\begin{lemma}\label{unequalsocles}
 Let $W$ be the repetition code in $H(m,2)$ and $C$ be a non-trivial $(X,2)$-neighbour-transitive extension of $W$ with $\delta\geq 5$. Then $\soc(X/K)\neq \soc(X_W/K)$.
\end{lemma}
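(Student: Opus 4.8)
The plan is to argue by contradiction: assume $\soc(X/K) = \soc(X_W/K)$ and derive that $X^M = X_W^M$, which forces $X_W$ to be transitive on $C$ and hence $C = W$ (a trivial extension), contradicting the hypothesis. The key observation is that $K = T_W$ by Lemma~\ref{kernelistranslations}, so $X/K \cong X^M$ and $X_W/K \cong X_W^M$, and the latter is a subgroup of the former since $K \leq X_W \leq X$. By Lemma~\ref{2transitive}, $X^M$ is $2$-transitive on $M$, so $\soc(X^M)$ is either a nonabelian simple group (the almost-simple case) or elementary abelian and regular (the affine case); in either case $\soc(X^M)$ is a minimal normal subgroup of $X^M$ and, being characteristically simple, is transitive on $M$ (indeed primitive).

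First I would dispose of the affine case. If $\soc(X^M) = T$ is elementary abelian and regular of order $m = p^k$, then $\soc(X_W^M)$ is by assumption the same subgroup $T$, which already acts transitively on $M$; hence $X_W^M$ is transitive on $M$, so $X_W$ is transitive on the entries. Combined with $X_W = T_W \rtimes X_{\b 0}$ from Lemma~\ref{kernelistranslations} and the fact that $T_W$ is transitive on $W$, transitivity of $X_W$ on $M$ should let me run an argument parallel to the proof of Lemma~\ref{onedimbinaryrep}: take $\alpha \in C \setminus W$ with $\alpha_i = 0$, $\alpha_j = 1$ for some $i,j$ (which exists since $C \neq W$), and use an element of $X_W$ moving $j$ to $i$ to contradict the structure of $W$ — or more directly, observe that $X_W$ transitive on $M$ together with $X_W$ fixing $W$ setwise and $C$ being a union of $X$-translates forces $C = W$. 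For the almost-simple case, $\soc(X^M) = S$ is a transitive nonabelian simple group; the equality assumption gives $\soc(X_W^M) = S$ as well, so again $X_W^M \geq S$ is transitive on $M$ and the same contradiction follows.

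Actually the cleanest uniform route is: the assumption $\soc(X/K) = \soc(X_W/K)$ means $X_W/K$ contains $\soc(X^M)$, which is transitive on $M$; therefore $X_W^M$ is transitive on $M$. Now I claim a transitive $X_W^M$ is impossible. Indeed, by Lemma~\ref{2transitive} we already know $\delta \neq m$, so there is a codeword $\alpha \in C \setminus W$ and entries $i \neq j$ with $\alpha_i = 0$, $\alpha_j = 1$. Since $X_W \cong T_W \rtimes X_{\b 0}$ with $X_{\b 0} \cong X_{\b 0}^M = X_W^M$ transitive on $M$, pick $x = h\sigma \in X_{\b 0}$ with $i^\sigma = j$; but $X_{\b 0}$ stabilises $W$ and $\b 0 \in C$... the genuine leverage is that $W$ being a block of imprimitivity (Corollary~\ref{Wis2nt}) with $X_W$ transitive on $M$ is too strong. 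The main obstacle I anticipate is pinning down exactly why transitivity of $X_W^M$ on $M$ contradicts $C \neq W$: the right statement is that $X_W$ acting on the coset space $X/X_W$ permutes the $X$-translates of $W$ inside $C$, and transitivity of $X_W$ on entries combined with $\delta_W = m$ (so $W$ is the full repetition code, the unique such $2$-neighbour-transitive subcode by Lemma~\ref{onedimbinaryrep}) should be incompatible with $W$ being a proper block — one shows instead that transitivity on $M$ of the block stabiliser would make every block equal to $W$, i.e., $C = W$. I would make this precise by noting that if $X_W^M$ is transitive then $X_W$ is transitive on $V\varGamma_1(\b 0)$ within each coordinate... and tracking how a codeword of $C$ of weight strictly between $0$ and $m$ cannot be an image of $\b 0$ under $X_W$, yet must lie in some block $W^x$ forcing $W^x \cap W \neq \emptyset$ hence $W^x = W$, contradiction. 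Filling in this last deduction carefully is where the real work lies.
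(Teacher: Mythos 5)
Your central step fails. You reduce everything to the claim that ``$X_W^M$ transitive on $M$ is impossible,'' but $X_W^M$ is \emph{always} transitive on $M$: by Corollary~\ref{Wis2nt} the subcode $W$ is $(X_W,2)$-neighbour-transitive, so Proposition~\ref{ihom} makes $X_W^M$ $2$-homogeneous on $M$ (this is recorded explicitly in Lemma~\ref{2transitive}). Hence the observation that $X_W/K$ contains the transitive group $\soc(X/K)$ yields no new information and no contradiction. Likewise, equality of socles does not give $X^M=X_W^M$: proper containments of $2$-homogeneous groups with equal socles abound ($\alt_m<\s_m$, $\PSL_2(q)<\PGL_2(q)$, $\AGL_1(8)<\AGaL_1(8)$), and it is precisely the index $|X^M:X_W^M|=|C|/|W|$ that measures the nontriviality of the extension, so you cannot conclude $C=W$ this way. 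Your fallback at the end --- that a codeword $\alpha\in C\setminus W$ ``must lie in some block $W^x$ forcing $W^x\cap W\neq\emptyset$'' --- is unjustified: the blocks $W^x$ partition $C$, and $\alpha$ simply lies in a block disjoint from $W$.

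What the lemma actually requires is an argument about the action of $X$ on the set of \emph{blocks}. The paper passes to the quotient of $H(m,2)$ by the $K$-orbits and lets $X$ act on $\O=\{\{J,\bar J\}\mid J\in C\}$; since $X$ is transitive on $\O$ with point stabiliser $X_W$, the assumption $\soc(X/K)=\soc(X_W/K)$ places the normal subgroup $\soc(X/K)$ inside every point stabiliser, i.e.\ inside the kernel $X_{(\O)}$ of this action. The real work is then showing $X_{(\O)}/K=1$: if the socle fixed every pair $\{J,\bar J\}$ while acting transitively on $M$, one would get $|J|=|\bar J|$, hence $2\mid m$ and $\delta=m/2$, and a chain of results (Kantor on $2$-homogeneous groups of even degree, Burnside on $2$-transitive groups with imprimitive socle, Hering's classification of transitive linear groups) rules this out. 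None of this machinery, nor any substitute for it, appears in your proposal, so the gap is not a matter of ``filling in the last deduction carefully'' --- the approach as stated cannot reach the conclusion.
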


\begin{proof}
 Let $H\leq X$ such that $K<H$ and $H/K=\soc(X/K)$. Note that this implies that $H\nsub X$. By Lemma~\ref{kernelistranslations}, $X_W=K\rtimes X_{\b 0}$. Suppose $H/K=\soc(X_W/K)$, and note that by Lemma~\ref{2transitive}, $X_W^M=X_W/K$ acts $2$-homogeneously on $M$ and $X^M\cong X/K$ acts $2$-transitively on $M$ with the same socle.

 By considering vertices as characteristic vectors of subsets of $M$, we may identify the set of all subsets of $M$ with the vertex set $V\cong\F_2^m$ of $H(m,2)$. By Lemma~\ref{kernelistranslations}, $K=T_W\cong\Z_2$. Consider the quotient of $H(m,2)$ by the orbits of $K$, thereby identifying each subset $J$ of $M$ with its complement ${\bar J}$. In particular, $W$ is identified with $\{\emptyset,M\}$. This gives induced actions of $X$, $X_W$ and $X_{\b 0}$ on the set:
 \[
  \O=\left\{\{J,{\bar J}\}\mid J\in C\right\}.
 \]
 Note that $\O$ is a set of partitions of $M$, and $x\in X\setminus X_W$ does not necessarily fix $\{|J|,|{\bar J}|\}$. Since the single non-trivial element of $K$ maps $J\subseteq M$ to ${\bar J}$, for each $J$, it follows that $K$ fixes every element of $\O$. Thus, $K$ is in the kernel $X_{(\O)}$ of the action of $X$ on $\O$. If $x\in X\setminus X_W$, then $\{\emptyset,M\}^x\neq \{\emptyset,M\}$, so that $X_{(\O)}\leq X_W$. By Lemma~\ref{kernelistranslations}, $X_W=K\rtimes X_{\b 0}$. It follows that $X_{(\O)}/K\nsub X_W/K$, and, since $H/K=\soc(X_W/K)$, either $X_{(\O)}/K=1$, or $H/K\nsub X_{(\O)}/K$.
 
 Suppose that $H/K\leq X_{(\O)}/K$. Note that, by assumption, $C\neq W$. As $H/K$ fixes $\O$ element-wise, $H/K$ fixes the non-trivial partition $\{J,{\bar J}\}$, for each $J\in C\setminus W$. Since $H/K=\soc(X_W/K)$ acts transitively on $M$, we have that $H/K$ acts imprimitively on $M$ and $|J|=|{\bar J}|$, so that $2$ divides $m$ and $\delta=m/2$. By \cite{Kantor1969}, a $2$-homogeneous but not $2$-transitive group has odd degree, and hence the fact that $2\mid m$ implies that $X_{\b 0}$ acts $2$-transitively on $M$. By \cite[Section 134 and Theorem~IX, p.~192]{burnside1911theory}, a $2$-transitive group with an imprimitive socle has a normal subgroup of prime power order. Thus, by \cite[Section~7.7]{dixon1996permutation}, we deduce that $X_W^M$ is affine and, since $2\mid m$, we have that $X_W^M\leq \AGL_d(2)$ and $M\cong \F_2^d$. Since $X_W^M$ and $X^M$ have the same socle, $X^M$ is also an affine $2$-transitive group. Now, if $U=\{J,{\bar J}\}$ is fixed by the group of translations of $\F_2^d$ acting on $M$, then either $J$ or ${\bar J}$ is a $(d-1)$-space of $M$. Let $i=0\in M$. Then $X_{W,i}$ acts transitively on $M\setminus \{i\}$, that is, on the set of $1$-spaces of $M$. Since each $1$-space is orthogonal to a $(d-1)$-space, it follows that $X_{W,i}$ also acts transitively on the set of $(d-1)$-spaces of $M$. This implies $|\O\setminus\{\emptyset,M\}|=2^d-1$, the number of $(d-1)$-spaces in $M$. Thus, $|C|=2^d|W|$. Now $K\leq X_W\leq X$ implies $|C|/|W|=|X|/|X_W|=|X^M|/|X_W^M|$, that is, $|X^M|=2^d|X_W^M|$. This gives a contradiction, as there is no finite transitive linear group acting on $2^d-1$ points with an index $2^d$ subgroup that remains transitive on $2^d-1$ points (see \cite[Hering's Theorem]{liebeckaffinerank3}). Thus, $X_{(\O)}/K=1$.
 
 By Lemma~\ref{2transitive}, $X^M$ acts $2$-transitively on $M$. Since $H/K=\soc(X/K)$, it follows that $H/K$ acts transitively on $M$. As $X$ acts transitively on $\O$, the stabiliser in $X/K$ of any element of $\O$ is conjugate in $X/K$ to the stabiliser $X_W/K$ of $\{\emptyset,M\}\in\O$. It follows from this that $H/K$ fixes every element of $\O$, since $H/K\nsub X/K$ and $H\leq X_W/K$. If $H/K$ fixes each element of $\O$ then $H/K\leq X_{(\O)}/K$, giving a contradiction. Thus $\soc(X/K)\neq\soc(X_W/K)$.
\end{proof}

\begin{lemma}\label{onedimsocnotam}
 Let $C$ be a non-trivial $(X,2)$-neighbour-transitive extension of $W$ with $\delta\geq 5$, where $W$ is the repetition code in $H(m,2)$. Then $\soc(X^M)\neq \alt_m$. 
\end{lemma}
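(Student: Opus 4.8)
The plan is to argue by contradiction: suppose $\soc(X^M)=\alt_m$, which for $m\geq 5$ forces $X^M\in\{\alt_m,\s_m\}$, so $|X^M|\geq m!/2$. First I would pin down the parameters. Since $C$ is a non-trivial extension, choose $\alpha\in C\setminus W$; as $W=\{\b 0,(1^m)\}$ with $\b 0\in C$, $T_W\leq X$ and $\delta\geq 5$, both $\b 0$ and $(1^m)$ lie in $C$, hence $\delta\leq\wt(\alpha)\leq m-\delta$ and therefore $m\geq 2\delta\geq 10$. By Lemma~\ref{kernelistranslations} we have $K=T_W\leq X_W$, so $b:=|X^M:X_W^M|=|X:X_W|$, and since $W$ is a block of imprimitivity (Corollary~\ref{Wis2nt}) with $X$ transitive on $C$, this index equals the number of $X$-translates of $W$ in $C$, namely $|C|/|W|=|C|/2$. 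The Singleton bound $|C|\leq 2^{m-\delta+1}$ then gives $b\leq 2^{m-\delta}\leq 2^{m-5}$.

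Next I would analyse $X_W^M$. By Lemma~\ref{2transitive} it acts $2$-homogeneously on $M$, hence primitively. By Lemma~\ref{unequalsocles}, $\soc(X_W^M)\neq\soc(X^M)=\alt_m$; since for $m\geq 5$ the only subgroups of $\s_m$ with socle $\alt_m$ are $\alt_m$ and $\s_m$, this means $X_W^M$ does not contain $\alt_m$. Thus $X_W^M$ is a primitive subgroup of $\s_m$ not containing $\alt_m$, and Bochert's classical bound on the order of such a group (see \cite[Theorem~3.3D]{dixon1996permutation}) yields $|X_W^M|\leq m!/\lfloor(m+1)/2\rfloor!$. Combined with $|X^M|\geq m!/2$ this gives $b=|X^M|/|X_W^M|\geq\tfrac12\lfloor(m+1)/2\rfloor!$.

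Comparing the two estimates for $b$ yields $\tfrac12\lfloor(m+1)/2\rfloor!\leq 2^{m-5}$, that is, $\lfloor(m+1)/2\rfloor!\leq 2^{m-4}$. This fails for every $m\geq 10$: it is false at $m=10$ (as $5!=120>64$) and at $m=11$ (as $6!=720>128$), and a short induction in steps of two finishes it, since each step multiplies the left-hand side by $\lfloor(m+1)/2\rfloor+1\geq 6$ and the right-hand side only by $4$. This contradiction proves $\soc(X^M)\neq\alt_m$.

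The step I expect to demand the most care is the index bookkeeping in the first paragraph: one must be certain that $b=|C|/2$ genuinely equals $|X^M:X_W^M|$ (which relies on $K\leq X_W$ and on $W$ being a block of imprimitivity), and that Bochert's bound is applied to $X_W^M$ as a subgroup of $\Sym(M)$ rather than of $X^M$. Obtaining a usable upper bound for $b$ is also delicate: the trivial estimate $b\leq 2^{m-1}$ is too weak to close the final numerical comparison, so both the Singleton bound and the inequality $m\geq 2\delta$ (equivalently, the existence of a codeword of intermediate weight) are genuinely needed.
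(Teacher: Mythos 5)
Your proposal is correct and follows essentially the same route as the paper: assume $\soc(X^M)=\alt_m$, use Lemma~\ref{2transitive} and Lemma~\ref{unequalsocles} to see that $X_W^M$ is primitive and does not contain $\alt_m$, apply Bochert's bound to get $|C|=2|X^M:X_W^M|\geq\lfloor(m+1)/2\rfloor!$, and contradict the Singleton bound $|C|\leq 2^{m-4}$. The only cosmetic differences are that you first establish $m\geq 2\delta\geq 10$ (the paper's final inequality already fails for all $m\geq 5$, so this is not needed) and you cite Dixon--Mortimer rather than Bochert/Wielandt for the index bound.
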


\begin{proof}
 Suppose $\soc(X^M)=\alt_m$. By Lemma~\ref{2transitive}, $X_W/K\cong X_W^M$ is a $2$-homogeneous group and thus primitive, and, by Lemma~\ref{unequalsocles}, $\soc(X_W/K)\neq\soc(X/K)$. By Lemma~\ref{kernelistranslations}, $|C|=|X:X_{\b 0}|=2|X:X_W|=2|X/K:X_W/K|$. Now, \cite{bochert1889ueber} (see also \cite[Theorem~14.2]{wielandt2014finite}) gives a lower bound on the index of a primitive non-trivial subgroup $G$ of the symmetric group $\s_m$, with $G$ not containing the alternating group, of $|\s_m:G|\geq \lfloor(m+1)/2\rfloor!$. Since $X_W^M$ is primitive and $X/K\cong \alt_m$ or $\s_m$, it follows that 
 \[
  |C|=2|X/K:X_W/K|\geq t|X/K:X_W/K|=|\s_m:X_W^M|\geq \lfloor(m+1)/2\rfloor!,
 \]
 where $t=1$ or $2$. However, by the Singleton bound we have $|C|\leq 2^{m-\delta+1}\leq 2^{m-4}$. Combining these two inequalities, we have $\lfloor(m+1)/2\rfloor!\leq 2^{m-4}$, which does not hold when $m\geq 5$.
\end{proof}
%

%

\begin{table}
\begin{center}
\begin{tabular}{cccc}
 $G$ & $H$ & degree\\
 \hline
 $\Z_7\rtimes \Z_3$ & $\PSL_3(2)$ & $7$\\
 $\Z_{11}\rtimes \Z_5$ & $\PSL_2(11)$ or $\mg_{11}$ & $11$\\
 $\Z_{23}\rtimes \Z_{11}$ & $\mg_{23}$ & $23$\\
 $\PSL_2(7)$ & $\AGL_3(2)$ & $8$\\
 $\alt_7$ & $\alt_8$ & $15$\\
 $\PSL_2(11)$ & $\mg_{11}$ & $11$\\
 $\PSL_2(11)$ or $\mg_{11}$ & $\mg_{12}$ & $12$\\
 $\PSL_2(23)$ & $\mg_{24}$ & $24$\\
 \hline
\end{tabular}
\caption{Groups $G<H\leq \s_m$ where $H$ is $2$-transitive, $G$ is $2$-homogeneous, $\soc(H)\neq \alt_m$ and $\soc(G)\neq \soc(H)$; see \cite[Proposition~4.4 and Table~3]{ef2nt}.}\label{2tr2hom}
\end{center}
\end{table}

The main theorem can now be proved.

\begin{proof}[Proof of Theorem~\ref{onedimensionaltheorem}]
 Suppose $C$ is an $(X,2)$-neighbour-transitive extension of $W$ with $\delta\geq 5$, where $W$ is a $k$-dimensional $\F_p$-subspace of $V=\F_p^{dm}$ and $1\leq k\leq d$. By Lemma~\ref{onedimbinaryrep}, $W$ is the binary repetition code (not just an equivalent copy of it, since $\b 0\in W$) and thus $q=2$. If $C=W$ then $C$ is a trivial extension of $W$ and outcome 1 holds. Suppose the extension is non-trivial. Then, by Lemma~\ref{2transitive}, $\delta\neq m$, $X^M$ acts $2$-transitively on $M$ and either $X_W^M$ is $2$-transitive and $X_{i,j}^M$ has an index $2$ normal subgroup, or $X_{W}^M$ acts $2$-homogeneously, but not $2$-transitively, on $M$. Also, by Lemma~\ref{unequalsocles}, the socles of $X^M$ and $X_W^M$ are not equal, and, by Lemma~\ref{onedimsocnotam}, $\soc(X^M)\neq \alt_m$. Thus, by \cite[Proposition~4.4]{ef2nt}, the possibilities for $X^M$ and $X_W^M$ are as in Table~\ref{2tr2hom}.
 
 Now $T_W\leq X$ implies that if there exists some weight $k$ codeword in $C$, then there is also a weight $m-k$ codeword. Thus $\delta\leq m/2$ and $\delta\geq 5$ implies $m\geq 10$. In particular, $X^M\neq \PSL_3(2)$ or $\AGL_3(2)$. Suppose $X^M\cong \PSL_2(11)$ and $m=11$. Then $\delta=5$ and, by Proposition~\ref{equivpunchad}, $C$ is either the punctured Hadamard code $\P$ or the even weight subcode $\E$ of the punctured Hadamard code. The even weight subcode of the punctured Hadamard code is not invariant under $T_W$, so $C\neq \E$. Moreover, as in the proof of \cite[Proposition~4.3]{ef2nt}, the only copy of $\PSL_2(11)$ in $\Aut(\P)$ fixes ${\b 0}$, and hence $X_{\b 0}^M\cong \PSL_2(11)$. This implies that $X_W^M=\PSL_2(11)$, by Lemma~\ref{kernelistranslations}, and thus $X^M=X_W^M$, a contradiction. 
 
 Suppose $m=23$, $X^M\cong \mg_{23}$ and $X_W^M\cong \Z_{23}\rtimes \Z_{11}$. By Lemma~\ref{kernelistranslations}, $X_W=T_W\rtimes X_{\b 0}$ and $K=T_W$, so that $|X_{\b 0}|=|X_W^M|$ which gives $|C|=|X|/|X_{\b 0}|=2|X^M|/|X_W^M|$, and hence $|C|=80640$. However, this contradicts the bound of $|C|\leq 24106$ for a code of length $23$ with $\delta\leq 5$ from \cite[Table~I and Theorem~1]{Best78boundsfor}.

 Suppose $m=15$, $X^M\cong \alt_8$ and $X_W^M\cong \alt_7$. Then $X_{i,j}^M\cong \alt_6$ is simple, contradicting Lemma~\ref{2transitive}.
 
 Suppose $m=11$, $X^M\cong \mg_{11}$ and $X_W^M\cong \PSL_2(11)$. Then, by Proposition~\ref{equivpunchad}, $C$ is either the punctured Hadamard code $\P$ or the even weight subcode of $\P$. The even weight subcode of $\P$ is not invariant under $T_W$, so $C=\P$. The automorphism group of $\P$ is $X=\Aut(\P)\cong 2\times \mg_{11}$ with $X_{\b 0}\cong\PSL_2(11)$ and $K=T_W$. By \cite[Theorem~1.1]{Gillespie20131394} $\P$ is an $(X,2)$-neighbour-transitive extension of $W$, as in outcome 3.
 
 Suppose $m=12$, $X^M\cong \mg_{12}$ and $X_W^M\cong \mg_{11}$ or $\PSL_2(11)$. If $X_W^M\cong \PSL_2(11)$ then, as the index of $\PSL_2(11)$ in $\mg_{12}$ is $144$, we have $|C|=288$. However, since $\delta\geq 5$, the Singleton bound gives $|C|\leq 2^{m-\delta+1}\leq 256$. Thus $X_W^M\cong \mg_{11}$ and $|C|=24$. If weight $5$ codewords exist then, by Lemma~\ref{design} and (2.1), there are 
 \[
  b=\frac{v(v-1)\lambda}{k(k-1)}=\frac{12\cdot 11\lambda}{5\cdot 4}=\frac{3\cdot 11\lambda}{5}
 \]
 of them, for some $\lambda$ divisible by $5$. Since $\lambda\geq 5$ implies $b\geq 33> |C|=24$, it follows that $\lambda=0$. Thus $\delta\geq 6$, and as $\delta\leq m/2=6$, it follows that $\delta=6$. The Hadamard code $\H$ of length $12$ with $X=\Aut(\H)\cong 2.\mg_{12}$, $X_{\b 0}\cong \mg_{12}$ and $K=T_W$ is then the unique $(X,2)$-neighbour-transitive extension of $W$ with these parameters, by \cite[Theorem~1.1]{Gillespie20131394}, as in outcome 2.
 
 Finally, suppose $m=24$, $X^M\cong \mg_{24}$ and $X_W^M\cong \PSL_2(23)$. Then $X_{i,j}^M\cong \mg_{22}$ is simple, contradicting Lemma~\ref{2transitive}.
\end{proof}

Finally, the proof of Corollary~\ref{comptranscorr} is given below.

\begin{proof}[Proof of Corollary~\ref{comptranscorr}]
 Suppose $C$ is $X$-completely transitive with minimum distance $\delta\geq 5$ such that $K=\Diag_m(S_2)$, and assume that ${\b 0}\in C$. The fact that $\delta\geq 5$ implies that $C_2$ is non-empty and thus $C$ is $(X,2)$-neighbour-transitive. Since $K\nsub X$ and $X$ acts transitively on $C$, it follows from Lemma~\ref{blockofimpis2nt} that the orbit $\Delta={\b 0}^K$ of ${\b 0}$ under $K$ is an $(X_\Delta,2)$-neighbour-transitive code. Since $K=\Diag_m(S_2)$ we have that $|\Delta|=2$ and $\Delta$ has minimum distance $m$. Thus, since any $2$-neighbour-transitive code is $2$-regular, \cite[Lemma~2.15]{ef2nt} implies that $\Delta$ is the binary repetition code in $H(m,2)$. Hence, $q=2$, $Q\cong \Z_2$ and $C$ satisfies the hypotheses of Theorem~\ref{onedimensionaltheorem}, and so is one of the codes listed there. The binary repetition code has automorphism group $\Diag_m(S_2)\rtimes \Sym(M)$ and is seen to be completely transitive by identifying the vertices of $H(m,2)$ with the subsets of $M$. By \cite[Theorem~1.1]{Gillespie20131394}, the Hadamard code of length $12$ and its punctured code are completely transitive. This completes the proof.
\end{proof}

\end{document}